\documentclass[a4paper]{amsart}
\usepackage{amsmath,amsthm,amssymb,latexsym,epic,bbm,comment,mathrsfs}
\usepackage{graphicx,enumerate,stmaryrd,xcolor}
\usepackage[all,2cell,knot]{xy}
\xyoption{2cell}

\newtheorem{theorem}{Theorem}
\newtheorem{lemma}[theorem]{Lemma}

\newtheorem{proposition}[theorem]{Proposition}
\newtheorem{conjecture}[theorem]{Conjecture}

\usepackage[all]{xy}
\usepackage[active]{srcltx}
\usepackage[parfill]{parskip}
\usepackage{enumerate}


\font\sc=rsfs10

\newcommand{\cP}{\sc\mbox{P}\hspace{1.0pt}}

\font\scc=rsfs7

\newcommand{\ccP}{\scc\mbox{P}\hspace{1.0pt}}

\begin{document}

\title[Kostant's problem for parabolic Verma modules]
{Kostant's problem for parabolic Verma modules}
\author[V.~Mazorchuk and S.~Srivastava]{Volodymyr Mazorchuk and Shraddha Srivastava}

\begin{abstract}
We give a complete combinatorial classification of those parabolic
Verma modules in the principal block of the parabolic category $\mathcal{O}$
associated to a minimal or a maximal parabolic subalgebra of the special linear Lie
algebra for which the answer to Kostant's problem is positive.
\end{abstract}

\maketitle

\section{Introduction and description of the results}\label{s1}

Let $\mathfrak{g}$ be a simple finite dimensional Lie algebra over
the field of complex numbers and $M$ be a $\mathfrak{g}$-module. The 
(associative) algebra of all linear endomorphisms of $M$ has 
various interesting (associative) subalgebras. One of these is 
the image of the universal enveloping algebra $U(\mathfrak{g})$,
which is naturally isomorphic to $U(\mathfrak{g})/\mathrm{Ann}_{U(\mathfrak{g})}(M)$.
The other one is the algebra $\mathcal{L}(M,M)$ of all 
linear endomorphisms of $M$ the adjoint action of $\mathfrak{g}$ 
on which is locally finite. The former algebra is  a
subalgebra of the latter and so it is natural to ask, for which 
$M$, the inclusion
\begin{displaymath}
U(\mathfrak{g})/\mathrm{Ann}_{U(\mathfrak{g})}(M)
\hookrightarrow \mathcal{L}(M,M)
\end{displaymath}
is an isomorphism.

For a given $M$, this is known as {\em Kostant's problem} (for $M$), 
as defined and popularized by Joseph in \cite{Jo}. It is a well-known
and, in general, wide open problem. No complete answer to this problem is
known, even for general simple highest weight modules. However, there 
are various families of modules for which the answer (sometimes positive and
sometimes negative) is known, see Subsection~\ref{s3.6} for a historical overview.

The first non-trivial classical family of modules, for which the answer to 
Kostant's problem was found is the family of Verma modules. Already in \cite{Jo}
it was shown that, for all Verma modules, the answer to the corresponding 
Kostant's problem is positive.

Verma modules are exactly the standard modules with respect to the 
(essentially unique, see \cite{Co}) highest weight structure on the {\em BGG category
$\mathcal{O}$} for $\mathfrak{g}$, see \cite{BGG,Hu}. 
Category $\mathcal{O}$ has a number of different
generalizations. One of these, proposed in \cite{RC}, is called {\em parabolic category
$\mathcal{O}$} and is associated with a choice of a parabolic subalgebra $\mathfrak{p}$
in $\mathfrak{g}$. Just like its ancestor, parabolic category $\mathcal{O}$ 
is a highest weight category. The standard objects with respect to this structure
are {\em parabolic Verma modules}. 

By construction, parabolic Verma modules are quotients of the usual Verma modules. 
In \cite[Section~7.32]{Ja} one can find an argument that shows that 
Kostant's problem has positive answer for any quotient of a Verma module, provided
that the latter is projective in $\mathcal{O}$. In particular, 
Kostant's problem has positive answer for any parabolic Verma module
that is projective in a regular block  of parabolic category
$\mathcal{O}$.  At this stage it is quite 
natural to wonder what the answer to Kostant's problem for 
general parabolic Verma modules will be.

In small ranks and for specific modules, it is sometimes possible to answer 
Kostant's problem by a direct computation. The smallest non-trivial example
of parabolic category $\mathcal{O}$ is for $\mathfrak{g}=\mathfrak{sl}_3$
where we take the parabolic subalgebra corresponding to one of the two simple 
roots. The regular block of the corresponding parabolic category $\mathcal{O}$
contains three parabolic Verma modules. As we already mentioned above, for the
projective parabolic Verma module, the answer to Kostant's problem is positive.
For the simple parabolic Verma module, the answer to Kostant's problem 
is also known to be positive, see \cite{GJ}. It was fairly surprising for us to 
find out, by a direct computation, that for the (remaining) third 
parabolic Verma module the  answer to Kostant's problem is negative.
This was clear evidence that determining the  answer to Kostant's problem
for parabolic Verma modules is a non-trivial problem as first one has to 
guess for which parabolic Verma modules the answer should be positive and 
for which it should be negative. So far, there is only one general result
in the literature which answers  Kostant's problem for a 
fairly natural general family of modules where both the cases of the positive
and the negative answers occur. This is the family of simple highest weight
modules over the special linear Lie algebra indexed by fully commutative
permutations, see the recent preprint \cite{MMM}. The main result of the
present paper is the second one of this kind.

In the present paper we study Kostant's problem for parabolic Verma modules
over the special linear Lie algebra in two ``opposite''
cases. The first case is the case of a minimal parabolic
subalgebra, that is the case when the semi-simple part of the parabolic
subalgebra is given by one simple root. In this case, parabolic Verma
modules are quotients of Verma modules by Verma submodules. These
kinds of quotients were studied in \cite{KMM}, where the emphasis was made on
a description of the socle for such a quotient. This description is crucial
for our proofs. Two other important ingredients in our arguments are:

\begin{itemize}
\item An adaptation of K{\aa}hrstr{\"o}m's conjectural combinatorial 
reformulation for Kos\-tant's problem, see \cite[Conjecture~1.2]{KMM2}.
\item An appropriate analogue of the $2$-representation theoretic 
reformulation of Kostant's problem, see \cite[Subsection~8.3]{KMM2}.
\end{itemize}

The second case we consider is that of a maximal parabolic subalgebra,
that is the case when the semi-simple part of the parabolic subalgebra
misses exactly one simple root.
In this case there is a very explicit diagrammatic description of 
the corresponding parabolic category $\mathcal{O}$ worked out in
\cite{BS11a,BS11b}. To answer Kostant's problem for 
parabolic Verma modules in this case, we combine this description with the 
main result of \cite{MMM}. Surprisingly, in this case the positive
answer is really rare. In fact, there are several infinite families 
of examples, where the answer is positive only for two parabolic Verma modules,
namely for the projective one and for the simple one, for which the 
answer is always positive (just like 
in the $\mathfrak{sl}_3$ example mentioned above).

The paper is organized as follows: In Section~\ref{s2} we collect all
relevant preliminaries on category~$\mathcal{O}$. In Section~\ref{s3}
we give a general perspective on Kostant's problem and its history.
In Section~\ref{s5} we study the case of a minimal parabolic subalgebra.
Our main result here is Theorem~\ref{mainthm} which completely
answers Kostant's problem for parabolic Verma modules in this case
and also relates it to K{\aa}hrstr{\"o}ms combinatorial
reformulation.
This theorem can be found in 
Subsection~\ref{s5.2}. In Section~\ref{s9} we study the case of a
maximal parabolic subalgebra. Our main result here is Theorem~\ref{mainthm2}
which completely answers Kostant's problem for 
parabolic Verma modules in this case.
This theorem can be found in Subsection~\ref{s9.5}. We finish the paper with
some general observations and speculations in Section~\ref{s8}.

\vspace{1mm}

\subsection*{Acknowledgments}
The first author is partially supported by the Swedish Research Council.

\section{Category $\mathcal{O}$ preliminaries}\label{s2}

\subsection{Setup}\label{s2.1}

In this paper, we work over the field $\mathbb{C}$ of complex numbers. 

As already mentioned above, $\mathfrak{g}$ is a simple finite dimensional 
complex Lie algebra. We fix a triangular decomposition 
\begin{equation}\label{eq1.1}
\mathfrak{g}=\mathfrak{n}_-\oplus \mathfrak{h}\oplus \mathfrak{n}_+, 
\end{equation}
where $\mathfrak{h}$ is some fixed Cartan subalgebra. Denote by $W$ 
the Weyl group of $\mathfrak{g}$ and by $S$ the set of simple reflections 
in $W$ which corresponds to the triangular decomposition \eqref{eq1.1} above.
As usual, we denote by $w_0$ the longest element of $W$.
Let $\leq$ denote the usual {\em Bruhat order} on $W$
and $\ell$ the length function on $W$.

In this paper we will mostly consider $\mathfrak{g}=\mathfrak{sl}_n$ with the
standard triangular decomposition given by the upper triangular, the diagonal and
the lower triangular matrices. In this case we have $W\cong \mathbf{S}_n$, the symmetric
group on $\{1,2,\dots,n\}$, with $S$ being the set of elementary transpositions.

\subsection{Category $\mathcal{O}$}\label{s2.2}

Associated to the triangular decomposition \eqref{eq1.1}, we have 
the BGG category $\mathcal{O}$ defined as the full subcategory of the
category of finitely generated $\mathfrak{g}$-modules consisting of all
modules on which the action of $\mathfrak{h}$ is diagonalizable and the
action of $\mathfrak{n}_+$ is locally finite. We refer to \cite{BGG,Hu} 
for details.

\subsection{Principal block}\label{s2.3}

Consider the principal block $\mathcal{O}_0$ of $\mathcal{O}$.
This block is defined as that indecomposable direct summand of 
$\mathcal{O}$ which contains the trivial $\mathfrak{g}$-module.
The isomorphism classes of simple objects in $\mathcal{O}_0$
are naturally indexed by the elements of $W$. For $w\in W$,
we have the corresponding simple highest weight module
$L_w:=L(w\cdot 0)$. Here $0\in \mathfrak{h}^*$ denotes the zero weight
and $\cdot$ denotes the dot-action of $W$ on $\mathfrak{h}^*$.
For $\lambda\in\mathfrak{h}^*$, the dot-action is defined
as follows: $w\cdot \lambda=w(\lambda+\rho)-\rho$. Here
$\rho$ is the half of the sum of positive roots.

Furthermore, for $w\in W$, we have the Verma cover $\Delta_w$ of 
$L_w$ and the indecomposable projective cover $P_w$ of $L_w$. 
We denote by $A$ the opposite of the endomorphism algebra of a 
multiplicity-free projective generator of $\mathcal{O}_0$. 
The algebra $A$ is a finite dimensional and
associative algebra. Moreover, the category $A$-mod of all
finite dimensional $A$-modules is equivalent to the category 
$\mathcal{O}_0$.

\subsection{Parabolic category $\mathcal{O}$}\label{s2.35}

Let $\mathfrak{p}$ be a parabolic subalgebra of $\mathfrak{g}$ containing 
$\mathfrak{h}\oplus \mathfrak{n}_+$.  The corresponding parabolic 
category $\mathcal{O}^\mathfrak{p}$ is defined as the full subcategory
of $\mathcal{O}$ that consists of all objects the action of 
$\mathfrak{p}$ on which is locally finite, see \cite{RC}. We further have the 
principal block $\mathcal{O}_0^\mathfrak{p}$ of $\mathcal{O}^\mathfrak{p}$
defined as $\mathcal{O}_0\cap \mathcal{O}^\mathfrak{p}$.

Let $W_\mathfrak{p}$ be the parabolic subgroup of $W$ corresponding
to $\mathfrak{p}$. Denote by $W^{\mathfrak{p}}_{\mathrm{short}}$ 
the set of the shortest
coset representatives in ${W_\mathfrak{p}}\hspace{-1mm}\setminus W$.
The category $\mathcal{O}_0^\mathfrak{p}$ is the Serre subcategory
of $\mathcal{O}_0$ generated by the simple objects $L_w$, where
$w\in W^{\mathfrak{p}}_{\mathrm{short}}$. The category 
$\mathcal{O}_0^\mathfrak{p}$ is equivalent to $A_{\mathfrak{p}}$-mod
for a certain quotient $A_{\mathfrak{p}}$ of $A$.

For $w\in W^{\mathfrak{p}}_{\mathrm{short}}$, we denote by 
$\Delta^\mathfrak{p}_w$ the corresponding parabolic Verma module
and by $P_w^\mathfrak{p}$ the corresponding indecomposable 
projective module in $\mathcal{O}_0^\mathfrak{p}$.

\subsection{Projective functors}\label{s2.4}

Following \cite{BG}, for every $w\in W$,  there is a unique, up to 
isomorphism, indecomposable projective endofunctor $\theta_w$ 
of the category $\mathcal{O}_0$ such that $\theta_w\, P_e\cong P_w$.
Any indecomposable projective endofunctor of $\mathcal{O}_0$
is isomorphic to $\theta_w$, for some $w\in W$.

Consider the monoidal category $\cP$ of all projective endofunctors
of $\mathcal{O}_0$. By Soergel's combinatorial description,
see \cite{So3}, the monoidal category $\cP$ is monoidally equivalent 
to the monoidal category of Soergel bimodules over the coinvariant 
algebra of the Weyl group $W$.

\subsection{Twisting functors}\label{s2.45}

For $w\in W$, we denote by $\top_w$ the corresponding
twisting functor, see \cite{AS,KhM}. For $x,y\in W$ such that
$\ell(xy)=\ell(x)+\ell(y)$, we have 
$\top_x\Delta_y\cong \Delta_{xy}$. The main, for us, properties of 
twisting functors are that they functorially commute with 
projective functors, that they are acyclic on Verma modules
and that the corresponding left derived functors are 
equivalences, see \cite{AS}.

\subsection{Graded lift}\label{s2.5}

By \cite{So}, the algebra $A$ is Koszul and hence it is equipped
with the corresponding Koszul $\mathbb{Z}$-grading.
We therefore have the category  ${}^\mathbb{Z}\mathcal{O}_0$ of
finite dimensional $\mathbb{Z}$-graded $A$-modules.
We denote by $\langle{}_-\rangle$ the functor which shifts the grading
with the convention that $\langle 1\rangle$ shifts degree $0$ to degree
$-1$.

All structural modules in the category $\mathcal{O}_0$ admit graded 
lifts, see \cite{St} for details. These graded lifts are unique, 
up to isomorphism and grading shift, for all indecomposable modules.

We fix standard graded lifts of all indecomposable structural modules
and will use for these standard graded lifts the same notation
as for the corresponding ungraded objects in $\mathcal{O}_0$,
abusing notation. Similarly, we have the standard graded lifts of 
the indecomposable projective functors, see \cite{St}.
We denote by $\cP^\mathbb{Z}$ the corresponding monoidal category of 
graded projective endofunctors of ${}^\mathbb{Z}\mathcal{O}_0$.

The algebra $A_{\mathfrak{p}}$ is a graded quotient of 
$A$ and we have the corresponding graded analogue 
${}^\mathbb{Z}\mathcal{O}_0^\mathfrak{p}$ of the category 
$\mathcal{O}_0^\mathfrak{p}$.

\subsection{Hecke algebra combinatorics}\label{s2.6}

Consider the Hecke algebra $\mathbf{H}=\mathbf{H}(W,S)$ of $W$
in the normalization of \cite{So2}. It is an algebra over the
commutative ring $\mathbb{Z}[v,v^{-1}]$. This algebra has the 
standard basis $\{H_w\,:\,w\in W\}$  as well as the 
Kazhdan--Lusztig basis $\{\underline{H}_w\,:\,w\in W\}$, 
defined in  \cite{KL}.

The Grothendieck group $\mathbf{Gr}[{}^\mathbb{Z}\mathcal{O}_0]$ 
of the category ${}^\mathbb{Z}\mathcal{O}_0$ is isomorphic to the algebra
$\mathbf{H}$ by sending $[\Delta_w]$ to $H_w$. This isomorphism
intertwines $\langle 1\rangle$ and multiplication with $v^{-1}$ and
sends $[P_w]$ to $\underline{H}_w$, see \cite{BB81,BK81}.

The split Grothendieck group $\mathbf{Gr}_\oplus[\cP^\mathbb{Z}]$
of the monoidal category $\cP^\mathbb{Z}$
is isomorphic to $\mathbf{H}$ by sending $[\theta_w]$ to $\underline{H}_w$.
The defining action of $\cP^\mathbb{Z}$ on the category $\mathcal{O}_0^\mathbb{Z}$
decategorifies to the right regular representation of $\mathbf{H}$, see \cite{So3}.

Denote by $\leq_L$, $\leq_R$ and $\leq_J$ the 
left, right and two-sided Kazhdan-Lusztig preorders on $W$, 
respectively.  The equivalence classes associated to these 
preorders are called left, right and two-sided cells. Each left
and each right cell contains a distinguished involution, which is 
called the Duflo involution. For symmetric groups, 
all involutions are, in fact,  Duflo involutions.

The singletons $\{e\}$ and $\{w_0\}$ are two-sided cells.
Further, we have the {\em small} two-sided cell $\mathcal{J}_s$
containing all simple reflections. In the case of the symmetric
group $\mathbf{S}_n$, the small cell contains exactly $n-1$ left and
$n-1$ right cells and consists of all elements having exactly
one unique expression. Multiplying the elements of this small
two-sided cell with $w_0$, we obtain the {\em penultimate}
two-sided cell $\mathcal{J}_p$.

\subsection{Bigrassmannian permutations and socles of cokernels of inclusions of Verma modules}\label{s2.7}

Recall that an element $w\in W$ is called {\em bigrassmannian} provided that 
there is a unique simple reflection $s$ such that $sw<w$ and 
there is a unique simple reflection $t$ (note that $t$ might be equal to $s$) 
such that $wt<w$. By \cite[Theorem~1]{KMM}, in the case of the algebra
$\mathfrak{sl}_n$, for $w\in \mathbf{S}_n$, the module 
$\Delta_e/(\Delta_w\langle -\ell(w)\rangle)$ has simple
socle if and only if $w$ is bigrassmannian.

Furthermore, the map $w\mapsto \mathrm{Soc}(\Delta_e/(\Delta_w\langle -\ell(w)\rangle))$
is a bijection between the set of all bigrassmannian elements in $\mathbf{S}_n$
and graded subquotients of $\Delta_e$ of the form $L_w\langle i\rangle$,
where $w\in \mathcal{J}_p$. If $w\in \mathbf{S}_n$ is not bigrassmannian, then the
socle of $\Delta_e/(\Delta_w\langle -\ell(w)\rangle$ is given by the 
elements that correspond, under the above bijection, to those bigrassmannian
elements in the Bruhat interval $[e,w]$ that are maximal among all 
bigrassmannian elements in this interval with respect to the Bruhat order.

\section{Kostant's problem}\label{s3}

\subsection{Harish-Chandra bimodules}\label{s3.2}

Recall from \cite[Kapitel~6]{Ja}, that a  $\mathfrak{g}$-$\mathfrak{g}$-bimodule $M$
is called a {\em Harish-Chandra bimodule} provided that it is finitely generated 
as a bimodule and, additionally,  provided that the adjoint action of 
$\mathfrak{g}$ on $M$ is locally finite with finite 
 multiplicities for composition factors.
For example, for any $M\in \mathcal{O}_0$, the bimodule
$U(\mathfrak{g})/\mathrm{Ann}_{U(\mathfrak{g})}(M)$ is a Harish-Chandra bimodule.

Another example can be given as follows: for two objects $M$ and $N$
in $\mathcal{O}_0$, the space $\mathcal{L}(M,N)$ of all linear maps from $M$ to 
$N$ on which the adjoint action of $\mathfrak{g}$ is locally finite forms 
a Harish-Chandra bimodule. In the case $M=N$, we have a natural inclusion 
$U(\mathfrak{g})/\mathrm{Ann}_{U(\mathfrak{g})}(M)\subset  \mathcal{L}(M,M)$.

\subsection{Classical Kostant's problem}\label{s3.3}

As we have already mentioned in the introduction, for a
$\mathfrak{g}$-module $M$, the corresponding Kostant's problem, 
as formulated in \cite{Jo}, is the following:

{\bf Kostant's  Problem.} Is the embedding 
$$U(\mathfrak{g})/\mathrm{Ann}_{U(\mathfrak{g})}(M)\hookrightarrow 
\mathcal{L}(M,M)$$ an isomorphism?

We will denote by $\mathbf{K}(M)
\in\{\mathtt{true},\mathtt{false}\}$ the logical value of the claim
``the embedding  $U(\mathfrak{g})/\mathrm{Ann}_{U(\mathfrak{g})}(M)\hookrightarrow 
\mathcal{L}(M,M)$ is an isomorphism''. 

\subsection{K{\aa}hrstr{\"o}m's conjecture}\label{s3.4}

The following conjecture, formulated in \cite[Conjecture~1.2]{KMM},
is due to Johan K{\aa}hrstr{\"o}m:

\begin{conjecture}\label{conj2}
Let $d$ be an involution in the symmetric group $\mathbf{S}_n$. 
Then the following assertions are equivalent:

\begin{enumerate}
\item\label{conj2.1} $\mathbf{K}(L_d)=\mathtt{true}$.
\item\label{conj2.2} For $x,y\in W$ such that $x\neq y$, $\theta_x L_d\neq 0$ and
$\theta_y L_d\neq 0$, we have $\theta_x L_d\not\cong \theta_y L_d$.
\item\label{conj2.3} For $x,y\in W$ such that $x\neq y$, $\theta_x L_d\neq 0$ and
$\theta_y L_d\neq 0$, we have $[\theta_x L_d]\neq [\theta_y L_d]$
in $\mathbf{Gr}[\mathcal{O}_0^\mathbb{Z}]$.
\item\label{conj2.4} For $x,y\in W$ such that $x\neq y$, $\theta_x L_d\neq 0$ and
$\theta_y L_d\neq 0$, we have $[\theta_x L_d]\neq [\theta_y L_d]$
in $\mathbf{Gr}[\mathcal{O}_0]$.
\end{enumerate} 
\end{conjecture}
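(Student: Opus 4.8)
Since the statement above is a conjecture that, to our knowledge, remains open in general, what follows is a proposed line of attack together with an indication of the principal difficulty, rather than a complete proof. Two of the implications are formal. The specialization $v\mapsto 1$ is a homomorphism $\mathbf{Gr}[\mathcal{O}_0^{\mathbb{Z}}]\to\mathbf{Gr}[\mathcal{O}_0]$, so distinct classes in $\mathbf{Gr}[\mathcal{O}_0]$ force distinct classes in $\mathbf{Gr}[\mathcal{O}_0^{\mathbb{Z}}]$, which is $(\ref{conj2.4})\Rightarrow(\ref{conj2.3})$; and isomorphic modules have equal graded classes, which gives $(\ref{conj2.3})\Rightarrow(\ref{conj2.2})$. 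Consequently it suffices to prove the two implications $(\ref{conj2.1})\Rightarrow(\ref{conj2.4})$ and $(\ref{conj2.2})\Rightarrow(\ref{conj2.1})$: together with the two formal ones these close a cycle through all four statements. One could equally aim at $(\ref{conj2.1})\Rightarrow(\ref{conj2.2})$, which is perhaps more classical, together with $(\ref{conj2.2})\Rightarrow(\ref{conj2.4})$ and $(\ref{conj2.4})\Rightarrow(\ref{conj2.1})$; the essential content is the same.

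For $(\ref{conj2.1})\Rightarrow(\ref{conj2.4})$ the plan is to argue on the level of Harish-Chandra bimodules. The inclusion $U(\mathfrak{g})/\mathrm{Ann}_{U(\mathfrak{g})}(L_d)\hookrightarrow\mathcal{L}(L_d,L_d)$ is always injective and both sides have finite length, so $\mathbf{K}(L_d)=\mathtt{true}$ is equivalent to these two bimodules having the same class in the Grothendieck group of Harish-Chandra bimodules; and, since $d$ is a Duflo involution in $\mathbf{S}_n$, the class of the primitive quotient $U(\mathfrak{g})/\mathrm{Ann}_{U(\mathfrak{g})}(L_d)$ is controlled by the left and right cells of $d$ via Joseph's work \cite{Jo}. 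For a projective functor $\theta$ there is a natural isomorphism $\mathcal{L}(L_d,\theta\,L_d)\cong\theta\,\mathcal{L}(L_d,L_d)$, so applying $\theta_x$ to the bimodule equality $U(\mathfrak{g})/\mathrm{Ann}_{U(\mathfrak{g})}(L_d)=\mathcal{L}(L_d,L_d)$ furnished by $\mathbf{K}(L_d)=\mathtt{true}$ identifies $\mathcal{L}(L_d,\theta_x L_d)$ with $\theta_x\big(U(\mathfrak{g})/\mathrm{Ann}_{U(\mathfrak{g})}(L_d)\big)$, and likewise for $y$. Passing to classes, the graded characters $[\theta_x L_d]$ and $[\theta_y L_d]$ in $\mathbf{H}\cong\mathbf{Gr}[\mathcal{O}_0^{\mathbb{Z}}]$ are read off from the left translates of the class of the primitive quotient by $\underline{H}_x$ and $\underline{H}_y$, and a coincidence $[\theta_x L_d]=[\theta_y L_d]$ in $\mathbf{Gr}[\mathcal{O}_0]$ becomes an equality of such translates, which one would exclude using the Kazhdan--Lusztig cell combinatorics of $\mathbf{S}_n$ in the spirit of \cite{KMM2}. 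Informally, Kostant positivity makes the total character $\sum_w[\theta_w L_d]$ rigid enough to forbid collisions among its summands.

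The converse $(\ref{conj2.2})\Rightarrow(\ref{conj2.1})$ is, we expect, the genuinely hard direction. The natural framework is the $2$-representation-theoretic reformulation of Kostant's problem from \cite[Subsection~8.3]{KMM2}: $\mathbf{K}(L_d)=\mathtt{true}$ is equivalent to a certain canonical morphism of $2$-representations of the monoidal category of projective functors — from the cell $2$-representation attached to the left cell of $d$, whose distinguished objects map onto the list $\{\theta_w L_d\}$, to the $2$-representation generated by $L_d$ — being an equivalence. Assertion $(\ref{conj2.2})$ says exactly that this morphism is injective on isomorphism classes of indecomposable objects, and the task is to upgrade injectivity to essential surjectivity and full faithfulness, i.e.\ to rule out that two of the $\theta_w L_d$ are non-isomorphic yet ``too close'', for instance carry more morphisms between them than the cell $2$-representation predicts. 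For $\mathbf{S}_n$ the plan is to control this defect through the explicit description, from \cite[Theorem~1]{KMM}, of the socles of cokernels of inclusions of Verma modules in terms of maximal bigrassmannian elements: these socles measure precisely the room by which $\mathcal{L}(L_d,L_d)$ can exceed the primitive quotient, and the associated bigrassmannian and penultimate-cell bookkeeping should render the comparison combinatorially tractable.

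The main obstacle — present in both directions but acute in $(\ref{conj2.2})\Rightarrow(\ref{conj2.1})$ — is the passage from object-level data (non-isomorphism of the $\theta_w L_d$, or equality of their classes) to morphism-level data (the dimensions of the spaces $\mathrm{Hom}(\theta_x L_d,\theta_y L_d)$, hence the actual bimodule structure of $\mathcal{L}(L_d,L_d)$). There is no formal implication in this direction, and for general Weyl groups it is unclear that the object-level statement should suffice; the conjecture asserts that for involutions in $\mathbf{S}_n$ it nevertheless does. Carrying this out seems to require an input genuinely beyond the combinatorics recalled in Section~\ref{s2}, and it is here that one should expect the argument to rely on the specific structure of $\mathbf{S}_n$ — its cells, Duflo involutions and bigrassmannian elements — rather than on general highest weight category methods, in the same spirit as the companion results of \cite{MMM} and of the present paper.
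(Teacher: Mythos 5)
This statement is presented in the paper as an \emph{open conjecture} (K{\aa}hrstr{\"o}m's conjecture, quoted from \cite{KMM2}); the paper gives no proof of it, so there is no argument of the authors to compare yours against, and you are right to treat it as unproved. Your formal reductions are correct --- \eqref{conj2.4}$\Rightarrow$\eqref{conj2.3} via the specialization $v\mapsto 1$ and \eqref{conj2.3}$\Rightarrow$\eqref{conj2.2} since isomorphic modules have equal classes --- and your identification of \eqref{conj2.2}$\Rightarrow$\eqref{conj2.1} as the genuinely hard, non-formal direction matches how the paper itself uses this circle of ideas: in the proved analogue for parabolic Verma modules (Theorem~\ref{mainthm}) the negative cases are settled precisely by exhibiting explicit collisions $\theta_{x^{-1}}\Delta^{\mathfrak{p}}_w\cong\theta_{y^{-1}}\Delta^{\mathfrak{p}}_w$, and the positive cases by twisting-functor reductions rather than by any general implication from object-level to morphism-level data. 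The rest of your text is a research programme, not a proof, and should be labelled as such rather than submitted as a solution.
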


\subsection{Kostant's problem and $2$-representation theory}\label{s3.5}

Given $M\in\mathcal{O}$, we can consider its annihilator $\mathrm{Ann}_{\ccP}(M)$,
which is a left monoidal ideal of $\cP$. The quotient $\cP/\mathrm{Ann}_{\ccP}(M)$
is, naturally, a birepresentation of $\cP$.

Alternatively, the additive closure $\mathrm{add}(\cP\, M)$ is also a
birepresentation of $\cP$. Mapping 
\begin{displaymath}
\cP\ni\theta\mapsto \theta(M)\in  \mathrm{add}(\cP\, M)
\end{displaymath}
defines an injective  morphism of birepresentation from
$\cP/\mathrm{Ann}_{\ccP}(M)$ to $\mathrm{add}(\cP\, M)$.
The arguments from \cite[Subsection~8.3]{KMM2} show that 
this morphism is an equivalence if and only if 
$\mathbf{K}(M)=\mathtt{true}$. 

In fact, that 
the birepresentation $\mathrm{add}(\cP\, M)$
of $\cP$ is given by the algebra $\mathcal{L}(M,M)$
follows from \cite[Subsection~6.8]{Ja}.
The principal birepresentation of $\cP$, that is the natural
left action of $\cP$ on $\cP$ corresponds to the 
identity $1$-morphism of $\cP$ by \cite[Formula~(4.2)]{MMMT}.
In the world of Harish-Chandra bimodules, the 
algebra $U(\mathfrak{g})$ surjects onto the
identity $1$-morphism of $\cP$. 
Consequently, the induced action of $\cP$ on
$\cP/\mathrm{Ann}_{\ccP}(M)$ corresponds to the algebra
given by the quotient of the identity $1$-morphism of $\cP$ 
(and hence also of $U(\mathfrak{g})$) by the
annihilator of $M$. The algebra homomorphism 
$U(\mathfrak{g})\to \mathcal{L}(M,M)$ is a manifestation of
the fact that $\cP/\mathrm{Ann}_{\ccP}(M)$ is a subbirepresentation of
$\mathrm{add}(\cP\, M)$ and, therefore, the positive answer to 
Kostant's problem for $M$ reformulates into the property of 
this natural embedding being an equivalence.

This reformulation allows us to connect the answers to 
Kostant's problem for modules related by twisting functors.

\begin{lemma}\label{lem3.5-1}
Let $M\in\mathcal{O}$ and $w\in W$ be such that 
$\mathcal{L}\top_w(M)\cong \top_w(M)\in \mathcal{O}$. Then 
$\mathbf{K}(M)=\mathbf{K}(\top_w(M))$.
\end{lemma}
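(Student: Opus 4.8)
The plan is to use the $2$-representation-theoretic reformulation of Kostant's problem described in Subsection~\ref{s3.5}, together with the fact that twisting functors commute with projective functors (Subsection~\ref{s2.45}). Write $N:=\top_w(M)$ and recall the standing assumption that $\mathcal{L}\top_w$ is exact enough at $M$ that $\mathcal{L}\top_w(M)\cong\top_w(M)=N$ lies in $\mathcal{O}$; since the total left derived functor $\mathcal{L}\top_w$ is an auto-equivalence of the bounded derived category $D^b(\mathcal{O}_0)$, the hypothesis says precisely that $M$ and $N$ correspond to each other under this equivalence and that no higher derived terms appear. First I would record the key compatibility: for every projective functor $\theta\in\cP$ there is an isomorphism of functors $\theta\circ\mathcal{L}\top_w\cong\mathcal{L}\top_w\circ\theta$ on $D^b(\mathcal{O}_0)$, hence, applying this at $M$ and using the hypothesis that everything is concentrated in homological degree $0$, a natural isomorphism $\theta(N)\cong\top_w(\theta(M))$ in $\mathcal{O}_0$, functorial in $\theta$.

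Next I would transport the two birepresentations of $\cP$ attached to $M$ across the equivalence $\mathcal{L}\top_w$. On the one hand, the assignment $\theta\mapsto\theta(M)$ and the assignment $\theta\mapsto\theta(N)\cong\top_w\theta(M)$ define, by the displayed isomorphism, equivalent birepresentations $\mathrm{add}(\cP\,M)\simeq\mathrm{add}(\cP\,N)$: the equivalence is implemented by $\mathcal{L}\top_w$ itself, which is fully faithful on the relevant objects because it is an auto-equivalence of the derived category and the objects in question sit in degree $0$. On the other hand, the annihilator ideal is preserved: $\theta(M)=0$ if and only if $\mathcal{L}\top_w\theta(M)\cong\theta\mathcal{L}\top_w(M)=\theta(N)=0$, because $\mathcal{L}\top_w$ is an equivalence; hence $\mathrm{Ann}_{\ccP}(M)=\mathrm{Ann}_{\ccP}(N)$ as left monoidal ideals of $\cP$, and therefore $\cP/\mathrm{Ann}_{\ccP}(M)=\cP/\mathrm{Ann}_{\ccP}(N)$ as birepresentations. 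Combining, the natural embedding
\begin{displaymath}
\cP/\mathrm{Ann}_{\ccP}(M)\hookrightarrow \mathrm{add}(\cP\,M)
\end{displaymath}
is intertwined by $\mathcal{L}\top_w$ with the natural embedding $\cP/\mathrm{Ann}_{\ccP}(N)\hookrightarrow\mathrm{add}(\cP\,N)$; here one must check that $\mathcal{L}\top_w$ sends the canonical morphism $\theta\mapsto\theta(M)$ to the canonical morphism $\theta\mapsto\theta(N)$, which follows from the naturality of the isomorphism $\theta\circ\mathcal{L}\top_w\cong\mathcal{L}\top_w\circ\theta$ in $\theta$. By the result quoted from \cite[Subsection~8.3]{KMM2}, the first embedding is an equivalence if and only if $\mathbf{K}(M)=\mathtt{true}$, and the second is an equivalence if and only if $\mathbf{K}(N)=\mathtt{true}$; since the two embeddings are identified, we conclude $\mathbf{K}(M)=\mathbf{K}(\top_w(M))$.

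The main obstacle I anticipate is not any single hard estimate but the careful bookkeeping needed to move between the derived-categorical statement (where $\mathcal{L}\top_w$ is genuinely an equivalence and commutes with the $\theta_x$) and the abelian-categorical statement about the birepresentations $\mathrm{add}(\cP\,M)$ and $\cP/\mathrm{Ann}_{\ccP}(M)$, which live in $\mathcal{O}_0$ rather than in $D^b(\mathcal{O}_0)$. The hypothesis $\mathcal{L}\top_w(M)\cong\top_w(M)\in\mathcal{O}$ is exactly what licenses this passage at the object $M$, but one still has to argue that it propagates to all objects of the form $\theta_x(M)$ — i.e. that $\mathcal{L}\top_w(\theta_x M)$ is also concentrated in degree $0$. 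This is immediate from $\mathcal{L}\top_w\circ\theta_x\cong\theta_x\circ\mathcal{L}\top_w$ together with the fact that $\theta_x$ is exact: $\mathcal{L}\top_w(\theta_x M)\cong\theta_x(\mathcal{L}\top_w M)\cong\theta_x\top_w(M)$, a module in $\mathcal{O}_0$. Once this is in place, the identification of the two embeddings is formal, and the lemma follows.
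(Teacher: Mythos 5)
Your argument is correct and is precisely the intended one: the paper's proof consists of a single sentence invoking the birepresentation-theoretic reformulation of Subsection~\ref{s3.5} together with the properties of twisting functors from Subsection~\ref{s2.45} (commutation with projective functors and the derived equivalence), and your proposal simply fills in the details of that argument. The point you single out as the main obstacle --- that the degree-zero concentration propagates from $M$ to all $\theta_x M$ via exactness of $\theta_x$ and the commutation isomorphism --- is exactly the bookkeeping the paper leaves implicit.
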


\begin{proof}
This follows from the birepresentation-theoretical reformulation
of Kostant's problem described above combined with the properties of twisting
functors mentioned in Subsection~\ref{s2.45}.
\end{proof}

\subsection{Known results on Kostant's problem}\label{s3.6}

Below we present a list of the results on Kostant's problem 
which we could find in existing literature.

\begin{itemize}
\item In the case of symmetric groups, the value $\mathbf{K}(L_w)$ is 
constant on Kazhdan--Lusztig left cells, as shown in \cite[Theorem~61]{MS}.
\item Let $w_0^\mathfrak{p}$ denote the longest element 
in $W_{\mathfrak{p}}$. Then $\mathbf{K}(L_{w_0^\mathfrak{p}w_0})=
\mathtt{true}$, see \cite[Theorem~4.4]{GJ} and \cite[Section~7.32]{Ja}.
\item If $s\in W^{\mathfrak{p}}$ is a simple reflection,
then $\mathbf{K}(L_{sw_0^\mathfrak{p}w_0})=
\mathtt{true}$, see \cite[Theorem~1]{Ma}.
\item \cite[Theorem~1.1]{Ka} relates the answer to Kostant's problem 
for certain highest weight $\mathfrak{g}$-modules to the 
answer to Kostant's problem for certain highest weight modules
over the Levi quotients of  $\mathfrak{p}$.
\item \cite[Theorem~1]{KaM} proposes a module-theoretic 
characterization of the equality $\mathbf{K}(L_w)=\mathtt{true}$.
\item In \cite[Section~4]{KaM}, one can find a complete answer to Kostant's 
problem for simple highest weight modules over $\mathfrak{sl}_n$, where 
$n=2,3,4,5$; and a partial answer for the same problem for $\mathfrak{sl}_6$. 
Some further $\mathfrak{sl}_6$-answers were computed in \cite[Section~6]{Ka}. 
The highest weight $\mathfrak{sl}_6$-story was completed in
\cite[Subsection~10.1]{KMM2}.
\item The paper \cite{MMM} provides a complete answer to 
Kostant's problem for simple highest weight $\mathfrak{sl}_n$-modules
indexed by fully commutative permutations.
\end{itemize}

\section{Main results: minimal parabolic subalgebras}\label{s5}

\subsection{Preliminaries}\label{s5.1}

For $n\geq 1$, we consider the symmetric group $\mathbf{S}_n$. We view elements
of $\mathbf{S}_n$ as functions on $\underline{n}:=\{1,2,\dots,n\}$ which we 
compose from right to left. 

Fix $k\in\{1,2,\dots,n-1\}$. For $1\leq i< j\leq n$, denote by
$\tau^{n,k}_{i,j}$ the unique element of $\mathbf{S}_n$ such that 
\begin{itemize}
\item $\tau^{n,k}_{i,j}(i)=k$;
\item $\tau^{n,k}_{i,j}(j)=k+1$;
\item for all $s<t$ in $\underline{n}\setminus\{i,j\}$ we have
$\tau^{n,k}_{i,j}(s)<\tau^{n,k}_{i,j}(t)$.
\end{itemize}
We denote by $\mathcal{X}_{n,k}$ the set of all these 
$\tau^{n,k}_{i,j}$. Clearly, $|\mathcal{X}_{n,k}|=\binom{n}{2}$.
We further split $\mathcal{X}_{n,k}$ into two disjoint subsets:
$\mathcal{X}_{n,k}=\mathcal{X}_{n,k}^+\coprod\mathcal{X}_{n,k}^-$,
where
\begin{displaymath}
\mathcal{X}_{n,k}^+=\{\tau^{n,k}_{i,i+1}\,:\,i=1,2,\dots,n-1\}
\quad\text{ and }\quad
\mathcal{X}_{n,k}^-=\mathcal{X}_{n,k}\setminus \mathcal{X}_{n,k}^+.
\end{displaymath}
Here is an example for $n=4$ and $k=2$, where in the first row
we list all elements in $\mathcal{X}_{4,3}^+$ and in the
second row we list all elements in $\mathcal{X}_{4,3}^-$
(for convenience, we color in {\color{magenta}magenta}
the important edges in these graphs
which go to $k=2$ and $k+1=3$):
\begin{displaymath}
\xymatrix@C=4mm@R=4mm{1\ar@{-}@[magenta][dr]&2\ar@{-}@[magenta][dr]
&3\ar@{-}[dll]&4\ar@{-}[d]\\1&2&3&4}\quad\quad\quad
\xymatrix@C=4mm@R=4mm{1\ar@{-}[d]&2\ar@{-}@[magenta][d]&
3\ar@{-}@[magenta][d]&4\ar@{-}[d]\\1&2&3&4}\quad\quad\quad
\xymatrix@C=4mm@R=4mm{1\ar@{-}[d]&2\ar@{-}[drr]&
3\ar@{-}@[magenta][dl]&4\ar@{-}@[magenta][dl]\\1&2&3&4}
\end{displaymath}
\begin{displaymath}
\xymatrix@C=4mm@R=4mm{1\ar@{-}@[magenta][dr]&2\ar@{-}[dl]&
3\ar@{-}@[magenta][d]&4\ar@{-}[d]\\1&2&3&4}\quad\quad\quad
\xymatrix@C=4mm@R=4mm{1\ar@{-}[d]&2\ar@{-}@[magenta][d]&
3\ar@{-}[dr]&4\ar@{-}@[magenta][dl]\\1&2&3&4}\quad\quad\quad
\xymatrix@C=4mm@R=4mm{1\ar@{-}@[magenta][dr]&2\ar@{-}[dl]&
3\ar@{-}[dr]&4\ar@{-}@[magenta][dl]\\1&2&3&4}
\end{displaymath}

We denote by $G_k$ the centralizer of $(k,k+1)$ in $\mathbf{S}_n$.
The group $G_k$ consists of all elements of $\mathbf{S}_n$ which leave
$\{k,k+1\}$ invariant, in particular, $G_k$ is the direct product
of the symmetric groups on $\{k,k+1\}$ and $\underline{n}\setminus\{k,k+1\}$.
Hence $|G_k|=2\cdot(n-2)!$. Let $\hat{G}_k$ denote the subgroup of
$G_k$ consisting of all elements which fix both $k$ and $k+1$.
Then $\hat{G}_k$ is naturally isomorphic to the symmetric group 
on $\underline{n}\setminus\{k,k+1\}$ and $|\hat{G}_k|=(n-2)!$.

\subsection{Formulation}\label{s5.2}

For $k\in\{1,2,\dots,n-1\}$, consider the parabolic subalgebra
$\mathfrak{p}=\mathfrak{p}_k$ of $\mathfrak{sl}_n$ corresponding to the simple
reflection $(k,k+1)$.
The composition $\circ$ in $\mathbf{S}_n$ gives rise to a bijection
\begin{displaymath}
\hat{G}_k\times \mathcal{X}_{n,k} \to  (\mathbf{S}_n)^{\mathfrak{p}}_{\mathrm{short}}.
\end{displaymath}

\begin{theorem}\label{mainthm}
For $w\in W^{\mathfrak{p}}_{\mathrm{short}}$, the following assertions are equivalent:
\begin{enumerate}[$($a$)$]
\item\label{mainthm.1} $\mathbf{K}(\Delta^{\mathfrak{p}}_w)=\mathtt{true}$.
\item\label{mainthm.2} For $x,y\in W$ such that $x\neq y$, 
$\theta_x \Delta^{\mathfrak{p}}_w\neq 0$ and
$\theta_y \Delta^{\mathfrak{p}}_w\neq 0$, we have $\theta_x \Delta^{\mathfrak{p}}_w\not\cong \theta_y \Delta^{\mathfrak{p}}_w$ (as ungraded modules).
\item\label{mainthm.3} For all $x,y\in W$ such that $x\neq y$, 
$\theta_x \Delta^{\mathfrak{p}}_w\neq 0$ and
$\theta_y \Delta^{\mathfrak{p}}_w\neq 0$, we have 
$[\theta_x \Delta^{\mathfrak{p}}_w]\neq [\theta_y \Delta^{\mathfrak{p}}_w]\langle i\rangle$,
for $i\in\mathbb{Z}$,
in $\mathbf{Gr}[\mathcal{O}_0^\mathbb{Z}]$.
\item\label{mainthm.4} For all $x,y\in W$ such that $x\neq y$, 
$\theta_x \Delta^{\mathfrak{p}}_w\neq 0$ and
$\theta_y \Delta^{\mathfrak{p}}_w\neq 0$, we have 
$[\theta_x \Delta^{\mathfrak{p}}_w]\neq [\theta_y \Delta^{\mathfrak{p}}_w]$
in $\mathbf{Gr}[\mathcal{O}_0]$.
\item\label{mainthm.5} $w\in \hat{G}_k\circ\mathcal{X}_{n,k}^+$.
\item\label{mainthm.6} The annihilator of $\Delta^{\mathfrak{p}}_w$ in $U(\mathfrak{g})$
is a primitive ideal.
\end{enumerate}
\end{theorem}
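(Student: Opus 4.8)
The plan is to establish the cycle of implications
$\eqref{mainthm.5}\Rightarrow\eqref{mainthm.1}\Rightarrow\eqref{mainthm.2}\Rightarrow\eqref{mainthm.3}\Rightarrow\eqref{mainthm.4}\Rightarrow\eqref{mainthm.5}$,
together with the equivalence $\eqref{mainthm.1}\Leftrightarrow\eqref{mainthm.6}$, treated separately. The implications among \eqref{mainthm.2}, \eqref{mainthm.3}, \eqref{mainthm.4} are the ``easy'' ones: \eqref{mainthm.2}$\Rightarrow$\eqref{mainthm.3} because an isomorphism of ungraded modules in particular forces equality of classes after a suitable grading shift, \eqref{mainthm.3}$\Rightarrow$\eqref{mainthm.4} by forgetting the grading, so the content is in the two ``outer'' arrows. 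The implication \eqref{mainthm.1}$\Rightarrow$\eqref{mainthm.2} should follow from the birepresentation-theoretic reformulation in Subsection~\ref{s3.5}: if $\theta_x\Delta^{\mathfrak p}_w\cong\theta_y\Delta^{\mathfrak p}_w$ with $x\neq y$ and both nonzero, then the apex of $\Delta^{\mathfrak p}_w$ detects a collision between the two $1$-morphisms $\theta_x,\theta_y$ in $\mathrm{add}(\cP\,\Delta^{\mathfrak p}_w)$ which is not visible in $\cP/\mathrm{Ann}_{\ccP}(\Delta^{\mathfrak p}_w)$ — the same mechanism as in K{\aa}hrstr{\"o}m's conjecture, adapted from simple modules to parabolic Verma modules using that $\Delta^{\mathfrak p}_w$ and $L_w$ have the same apex and the same relevant cell structure.

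For the genuinely hard direction, \eqref{mainthm.4}$\Rightarrow$\eqref{mainthm.5} and its converse pipeline, I would reduce everything to the combinatorics of the action of projective functors on $\Delta^{\mathfrak p}_w$. Using the bijection $\hat G_k\times\mathcal X_{n,k}\to(\mathbf S_n)^{\mathfrak p}_{\mathrm{short}}$ and Lemma~\ref{lem3.5-1}, I expect one can ``strip off'' the $\hat G_k$-part by a twisting functor that is acyclic on the parabolic Verma module in question (this is where one must check that the relevant twisting functor really does preserve the module up to the derived-functor condition — I anticipate this is the first technical hurdle), thereby reducing to the case $w\in\mathcal X_{n,k}$. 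Then, since a minimal parabolic Verma module $\Delta^{\mathfrak p}_w$ is exactly a cokernel of an inclusion of Verma modules $\Delta_{w'}\hookrightarrow\Delta_w$, one invokes \cite[Theorem~1]{KMM} and the bigrassmannian description recalled in Subsection~\ref{s2.7}: the socle of such a cokernel is simple precisely when the associated element is bigrassmannian, and I would match the dichotomy $\mathcal X_{n,k}^+$ versus $\mathcal X_{n,k}^-$ with whether the governing permutation is bigrassmannian (equivalently, whether the socle is simple). Computing $[\theta_x\Delta^{\mathfrak p}_w]$ in the Hecke algebra via Subsection~\ref{s2.6} (the class of $\Delta^{\mathfrak p}_w$ is an alternating sum of $H_u$ over a parabolic coset, and $\theta_x$ acts by right multiplication by $\underline H_x$), one reduces the non-collision statement \eqref{mainthm.4} to a statement about Kazhdan--Lusztig combinatorics in $\mathbf S_n$ restricted to the small and penultimate cells; the elements in $\mathcal X_{n,k}^+$ are the ``reflection-like'' ones lying in particularly favourable cells, and I expect the collision $[\theta_x\Delta^{\mathfrak p}_w]=[\theta_y\Delta^{\mathfrak p}_w]$ for $w\in\hat G_k\circ\mathcal X_{n,k}^-$ to be witnessed by an explicit pair $x\neq y$ coming from the two ``branches'' of a non-simple socle.

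For \eqref{mainthm.5}$\Rightarrow$\eqref{mainthm.1}, the idea is that when $w\in\hat G_k\circ\mathcal X_{n,k}^+$, the parabolic Verma module $\Delta^{\mathfrak p}_w$ is, up to twisting (Lemma~\ref{lem3.5-1}) and the reduction above, either projective in a regular block of $\mathcal O^{\mathfrak p}_0$ (so \cite[Section~7.32]{Ja} applies), or is $L_{sw_0^{\mathfrak p}w_0}$ for a simple reflection $s$, or is otherwise covered by \cite[Theorem~4.4]{GJ}/\cite[Theorem~1]{Ma}; in all these cases $\mathbf K=\mathtt{true}$ is already in the literature. Alternatively, one can run the birepresentation argument in the positive direction: show that for these $w$ the embedding $\cP/\mathrm{Ann}_{\ccP}(\Delta^{\mathfrak p}_w)\hookrightarrow\mathrm{add}(\cP\,\Delta^{\mathfrak p}_w)$ is surjective by a rank count, using that $\theta_x\mapsto\theta_x\Delta^{\mathfrak p}_w$ separates the relevant $\underline H_x$ precisely under the $\mathcal X_{n,k}^+$ hypothesis — this is the same combinatorial input as \eqref{mainthm.4}$\Rightarrow$\eqref{mainthm.5} read backwards, which is why closing the cycle is natural. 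Finally, \eqref{mainthm.1}$\Leftrightarrow$\eqref{mainthm.6}: the annihilator of $\Delta^{\mathfrak p}_w$ equals the annihilator of its simple socle constituents, and when the socle is simple, say $L_u$, primitivity of $\mathrm{Ann}\,\Delta^{\mathfrak p}_w=\mathrm{Ann}\,L_u$ is automatic, while $\mathbf K(\Delta^{\mathfrak p}_w)$ agrees with $\mathbf K$ of that simple module by a standard argument comparing $\mathcal L$-bimodules of a module and its socle when the inclusion induces an isomorphism on annihilators; when the socle is not simple the annihilator is an intersection of two distinct primitive ideals, hence not primitive, matching the failure of \eqref{mainthm.1}. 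The step I expect to be the main obstacle is the precise Hecke-algebra computation showing that the $\mathcal X_{n,k}^-$ case always produces a genuine collision $[\theta_x\Delta^{\mathfrak p}_w]=[\theta_y\Delta^{\mathfrak p}_w]$ (not merely up to grading shift) with $x\neq y$ and both factors nonzero — extracting such an explicit pair from the non-simplicity of the socle, and verifying nonvanishing of both $\theta_x\Delta^{\mathfrak p}_w$ and $\theta_y\Delta^{\mathfrak p}_w$, is where the cell-theoretic bookkeeping is most delicate.
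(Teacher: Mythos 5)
Your overall architecture points at the right tools (twisting functors plus Lemma~\ref{lem3.5-1}, the birepresentation reformulation of Subsection~\ref{s3.5}, and the bigrassmannian analysis of \cite{KMM}), but there are several concrete gaps. First, you have the easy implications backwards: the trivial chain is \eqref{mainthm.4}$\Rightarrow$\eqref{mainthm.3}$\Rightarrow$\eqref{mainthm.2} (distinct classes in the Grothendieck group force non-isomorphism), not \eqref{mainthm.2}$\Rightarrow$\eqref{mainthm.3}$\Rightarrow$\eqref{mainthm.4}; indeed the justifications you give (``an isomorphism forces equality of classes'', ``forget the grading'') are contrapositives of the reverse implications. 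So your cycle does not close as stated. The paper avoids this issue by proving that all six assertions hold when $w\in\hat{G}_k\circ\mathcal{X}_{n,k}^+$ and that all six fail otherwise. Second, your treatment of the positive cases is wrong in substance: for $w=\sigma\circ\tau^{n,k}_{i,i+1}$ the module $\Delta^{\mathfrak{p}}_w$ is not, up to twisting, one of the modules already covered by the literature for the parabolic $\mathfrak{p}=\mathfrak{p}_k$. The key point of Subsection~\ref{s5.3} is the identity $\tau^{n,k}_{i,i+1}\circ(i,i+1)=(k,k+1)\circ\tau^{n,k}_{i,i+1}$, which shows that $\Delta^{\mathfrak{p}_k}_w\cong\top_w\Delta^{\mathfrak{p}_i}_e$ for a \emph{different} minimal parabolic $\mathfrak{p}_i$, whose $\Delta^{\mathfrak{p}_i}_e$ is a quotient of the projective Verma module $\Delta_e$ and hence Kostant positive by \cite[Section~7.32]{Ja}. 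Without this change of parabolic your reduction has nothing to land on, and the same mechanism (linear independence of the classes of the projectives in $\mathcal{O}_0^{\mathfrak{p}_i}$ transported by the derived equivalence $\mathcal{L}\top_w$) is what gives \eqref{mainthm.4} in the positive cases.

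Third, your route to \eqref{mainthm.1}$\Leftrightarrow$\eqref{mainthm.6} invokes a ``standard argument'' that $\mathbf{K}(\Delta^{\mathfrak{p}}_w)$ agrees with $\mathbf{K}$ of its simple socle; no such argument is available --- the authors explicitly record in Subsection~\ref{s8.4} that they do not know whether this equality holds in general. The paper instead proves \eqref{mainthm.6} directly: in the positive cases the annihilator is preserved under $\top_w$ and its adjoint and equals the primitive annihilator of the socle of $\Delta^{\mathfrak{p}_i}_e$; in the negative cases it lies in the intersection of two \emph{incomparable} primitive ideals (distinctness alone would not suffice). Fourth, in the negative cases your proposed dichotomy, matching $\mathcal{X}_{n,k}^+$ versus $\mathcal{X}_{n,k}^-$ with bigrassmannianity of ``the governing permutation'', is false if applied to $w$ or to $\tau^{n,k}_{i,j}$ itself: for instance $\tau^{4,2}_{1,3}=s_1$ is bigrassmannian yet lies in $\mathcal{X}_{4,2}^-$. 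What the paper actually counts, case by case, is the set of bigrassmannian elements in the Bruhat complement $[e,s_k\tau]\setminus[e,\tau]$, showing there are always at least two; this yields two penultimate composition factors $L_x$ and $L_y$ of $\Delta^{\mathfrak{p}}_w$ lying in different left cells of the same right cell, whence $\theta_{x^{-1}}\Delta^{\mathfrak{p}}_w\cong\theta_{x^{-1}}L_x\cong P^{\mathfrak{p}}_d\cong\theta_{y^{-1}}L_y\cong\theta_{y^{-1}}\Delta^{\mathfrak{p}}_w$ by \cite[Lemma~12]{MM1} and \cite[Section~3]{Ma2}. This explicit collision is precisely the step you defer as ``delicate bookkeeping'', and it is the mathematical core of the negative half; the Hecke-algebra computation you sketch in its place is neither carried out nor obviously easier.
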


We would like to point out the subtle difference between
Theorem~\ref{mainthm}\eqref{mainthm.3} and
Conjecture~\ref{conj2}\eqref{conj2.3}.
Theorem~\ref{mainthm} provides a complete answer to 
Kostant's problem for parabolic Verma modules in the above setup.

\subsection{Positive cases}\label{s5.3}
Let $w\in \hat{G}_k\circ\mathcal{X}_{n,k}^+$. For this $w$ we will prove
that all assertions in Theorem~\ref{mainthm} hold. Note that
Theorem~\ref{mainthm}\eqref{mainthm.5} is obvious.
Also note the obvious implications
Theorem~\ref{mainthm}\eqref{mainthm.4}
$\Rightarrow$Theorem~\ref{mainthm}\eqref{mainthm.3}
$\Rightarrow$Theorem~\ref{mainthm}\eqref{mainthm.2}.

Assume that $w=\sigma\circ \tau^{n,k}_{i,i+1}$, for some
$\sigma\in \hat{G}_k$ and some $i\in\{1,2,\dots,n-1\}$.
We have the short exact sequence
\begin{displaymath}
0\to\Delta_{(i,i+1)}\to\Delta_e\to\Delta^{\mathfrak{p}_i}_e\to 0. 
\end{displaymath}
Since $\Delta^{\mathfrak{p}_i}_e$ is a quotient of $\Delta_e$,
we have $\mathbf{K}(\Delta^{\mathfrak{p}_i}_e)=\mathtt{true}$
by \cite[Section~7.32]{Ja}.

Note that $\tau^{n,k}_{i,i+1}\circ (i,i+1)=(k,k+1)\circ\tau^{n,k}_{i,i+1}$
and that $\sigma\circ (i,i+1)=(i,i+1)\circ \sigma$ since $\sigma\in \hat{G}_k$.
From the definition of $\hat{G}_k$ it also follows that 
$\ell((k,k+1)w)=\ell(w)+1$. Therefore, applying $\top_{w}$ to the above
short exact sequence, we obtain
\begin{displaymath}
0\to\Delta_{(k,k+1)w}\to\Delta_w\to\Delta^{\mathfrak{p}}_w\to 0. 
\end{displaymath}
Since $\top_{w}$ is exact and acyclic on Verma modules,
we can now apply Lemma~\ref{lem3.5-1} and conclude that 
$\mathbf{K}(\Delta^{\mathfrak{p}}_w)=\mathtt{true}$, giving
Theorem~\ref{mainthm}\eqref{mainthm.1}.

The non-zero objects of the form $\theta_x\, \Delta^{\mathfrak{p}_i}_e$
are exactly the indecomposable projective objects in 
$\mathcal{O}_0^{\mathfrak{p}_i}$. Since the latter category is a 
highest weight category, it has finite global dimension and thus
the images of the indecomposable projective objects form a basis in
the Grothendieck group of this category. In particular, these images
are linearly independent and therefore different.

As $\mathcal{L}\top_w$ is a derived equivalence, the classes of 
the non-zero modules of the form $\top_w \theta_x\, \Delta^{\mathfrak{p}_i}_e$
are also linearly independent in the corresponding Grothendieck group.
In particular, they are different, which implies 
Theorem~\ref{mainthm}\eqref{mainthm.4} (and thus also both 
Theorem~\ref{mainthm}\eqref{mainthm.3} and 
Theorem~\ref{mainthm}\eqref{mainthm.2}, see above).

It remains to prove Theorem~\ref{mainthm}\eqref{mainthm.6}. 
The construction of twisting functors via localization
(see \cite{KhM}) immediately implies that
$\mathrm{Ann}_{U(\mathfrak{g})}(\top_w M)\supset \mathrm{Ann}_{U(\mathfrak{g})}(M)$.
The right adjoints of twisting functors, which can be
realized via Enright completion functors when acting on Verma modules,
have the same property. This means that 
$\mathrm{Ann}_{U(\mathfrak{g})}(\Delta^{\mathfrak{p}}_w)=
\mathrm{Ann}_{U(\mathfrak{g})}(\Delta^{\mathfrak{p}_i}_e)$.
The fact that the latter annihilator coincides with the annihilator of the
(simple) socle of $\Delta^{\mathfrak{p}_i}_e$ is a standard fact,
for example, it follows from \cite[Proposition~5.1]{Ka}.
Therefore $\mathrm{Ann}_{U(\mathfrak{g})}(\Delta^{\mathfrak{p}_i}_e)$ 
(and thus also $\mathrm{Ann}_{U(\mathfrak{g})}(\Delta^{\mathfrak{p}}_w)$)
is a primitive ideal.

This completes the proof of all positive cases.

\subsection{Negative cases}\label{s5.4}
Let $w\in \hat{G}_k\circ\mathcal{X}_{n,k}^-$. For this $w$ we will prove
that all assertions in Theorem~\ref{mainthm} fail. Note that
$\neg$Theorem~\ref{mainthm}\eqref{mainthm.5} is obvious.
Also note the obvious implication
$\neg$Theorem~\ref{mainthm}\eqref{mainthm.2}
$\Rightarrow$ $\neg$Theorem~\ref{mainthm}\eqref{mainthm.4}.

The argument with twisting functors used in the previous subsection 
reduces the present consideration to the case $w\in \mathcal{X}_{n,k}^-$,
that is $w=\tau^{n,k}_{i,j}$, for some $j\geq i+2$. We note that this
$w$ is not bigrassmannian. Let us use the results of \cite{KMM}
to analyze the subquotients of
$\Delta^\mathfrak{p}_{\tau^{n,k}_{i,j}}=
\Delta_{\tau^{n,k}_{i,j}}/\Delta_{(k,k+1)\tau^{n,k}_{i,j}}$
of the form $L_x$, where  $x\in \mathcal{J}_p$. We will loosely call
such subquotients {\em penultimate}. 

To understand the penultimate subquotients in 
$\Delta^\mathfrak{p}_{\tau^{n,k}_{i,j}}$, we just need
to compare such subquotients for the modules $\Delta_e/\Delta_{\tau^{n,k}_{i,j}}$
and for $\Delta_e/\Delta_{(k,k+1)\tau^{n,k}_{i,j}}$. By
\cite[Theorem~1]{KMM}, the difference between these two sets 
is in bijection with the bigrassmannian elements in the Bruhat complement
$[e,(k,k+1)\tau^{n,k}_{i,j}]\setminus [e,\tau^{n,k}_{i,j}]$. Taking into account
the very explicit forms of $\tau^{n,k}_{i,j}$ and $(k,k+1)\tau^{n,k}_{i,j}$,
we can determine all bigrassmannian elements in this Bruhat complement. 
This, however, will require  consideration of a number of different cases.

To simplify our notation, we denote $\tau^{n,k}_{i,j}$ simply by $\tau$.
We also denote the elementary transposition $(l,l+1)$ by $s_l$.

\subsection{Case~1: $k=i$}

In this case $\tau=s_{k+1}s_{k+2}\dots s_{j-1}$ and the Bruhat complement
$[e,s_k\tau]\setminus[e,\tau]$ contains the following bigrassmannian elements:
$s_{k}$, $s_{k}s_{k+1}$,\dots, $s_k\tau$. Note that there are at least two
such elements and that they all have different right descents.

Consequently, the module $\Delta^\mathfrak{p}_{\tau^{n,k}_{i,j}}$
contains at least two (pairwise) non-isomorphic penultimate subquotients.
From \cite[Proposition~22]{KMM} it follows that they appear in different degrees.
Let $L_x$ and $L_y$ be two of them (living in some degrees). Note that
both $x$ and $y$ belong to the same Kazhdan-Lusztig right cell,
namely the unique right cell inside $\mathcal{J}_p$ which 
indexes some simples in $\mathcal{O}_0^{\mathfrak{p}}$.
Since $x\neq y$, these two elements belong to different Kazhdan-Lusztig left cells.
In particular, from \cite[Lemma~12]{MM1} it follows that $\theta_{x^{-1}}$ kills
all simples in $\mathcal{O}_0^{\mathfrak{p}}$ except for 
$L_x$ while $\theta_{y^{-1}}$ kills
all simples in $\mathcal{O}_0^{\mathfrak{p}}$ except for 
$L_y$.

Therefore, up to graded shift, $\theta_{x^{-1}}\Delta^{\mathfrak{p}}_w$
is isomorphic to $\theta_{x^{-1}}L_x$. Similarly, 
$\theta_{y^{-1}}\Delta^{\mathfrak{p}}_w$
is isomorphic to $\theta_{y^{-1}}L_y$. However, we have
$\theta_{x^{-1}}L_x\cong P^\mathfrak{p}_d\cong \theta_{y^{-1}}L_y$,
where $d$ is the Duflo involution in the right cell of $x$,
see \cite[Section~3]{Ma2}. Note that
the latter cell coincides with the right cell of $y$.
This shows that Theorem~\ref{mainthm}\eqref{mainthm.2},
Theorem~\ref{mainthm}\eqref{mainthm.3} and 
Theorem~\ref{mainthm}\eqref{mainthm.4} fail.

Since Theorem~\ref{mainthm}\eqref{mainthm.2} fails, the birepresentations
$\cP/\mathrm{Ann}_{\cP}(\Delta^{\mathfrak{p}}_w)$
and $\mathrm{add}(\cP\, \Delta^{\mathfrak{p}}_w)$ cannot be
equivalent and hence Theorem~\ref{mainthm}\eqref{mainthm.1}
fails as well.

Finally, since $L_x$ and $L_y$ belong to different left cells inside
the same two-sided cell, their annihilators are incomparable primitive
ideals of minimal Gelfand-Kirillov dimension (among all other 
primitive ideals for $\mathcal{O}_0^\mathfrak{p}$). Therefore
the annihilator of $\Delta_w^\mathfrak{p}$ must be contained in the intersection of 
these two ideals and hence cannot be a primitive ideal.
This shows that Theorem~\ref{mainthm}\eqref{mainthm.6} fails
and completes Case~1.

\subsection{Case~2: $k+1=j$} This case follows from Case~1 using
the symmetry of the root system.

\subsection{Case~3: $i>k$}

In this case $\tau=s_{k+1}s_{k+2}\dots s_{j-1}s_{k}s_{k+1}\dots s_{i-1}$ 
and the Bruhat complement $[e,s_k\tau]\setminus[e,\tau]$ 
contains the following bigrassmannian elements:
$s_{k}s_{k+1}\dots s_{i}$, $s_ks_{k+1}\dots s_{i+1}$\dots, 
$s_ks_{k+1}\dots s_{j-1}$. Note that there are at least two such elements
(since $j\neq i+1$). Applying to them the arguments from
Case~1, we complete the proof.

\subsection{Case~4: $j<k+1$} This case follows from Case~3 using
the symmetry of the root system.

\subsection{Case~5: $i<k$ and $j>k+1$}

In this case $\tau=s_{k-1}s_{k-2}\dots s_{i}s_{k+1}s_{k+2}\dots s_{j-1}$ 
and the Bruhat complement $[e,s_k\tau]\setminus[e,\tau]$ 
contains the following bigrassmannian elements:
$s_{k}$, $s_{k}s_{k+1}$,\dots, $s_ks_{k+1}\dots s_{j-1}$,
$s_ks_{k-1}$,\dots $s_ks_{k-1}\dots s_{i}$. 
Clearly, there are at least two such elements.
Applying to them the arguments from
Case~1, we complete the proof.

\subsection{Asymptotic}\label{s5.5}

From Theorem~\ref{mainthm}, we see that the ratio of positive vs 
negative cases equals 
\begin{displaymath}
\frac{|\mathcal{X}_{n,k}^+|}{|\mathcal{X}_{n,k}^-|} =
\frac{n-1}{\binom{n}{2}-(n-1)}.
\end{displaymath}
This goes to $0$ when $n$ goes to infinity, which aligns
with the results of \cite[Section~6]{MMM}. It is also an interesting
observation that this  ratio does not depend on $k$.

\section{Main results: maximal parabolic subalgebras}\label{s9}

\subsection{Setup}\label{s9.1}

For $1\leq k<n$, set $m=n-k$. Let $\mathfrak{q}=\mathfrak{q}_k$ be the
unique parabolic subalgebra of $\mathfrak{sl}_n$ whose Levi factor is
$\mathfrak{sl}_k\oplus\mathfrak{sl}_m$, with $\mathfrak{sl}_k$ 
adjusted at the top left corner. This means that the only simple
reflection that is missed by $\mathfrak{q}_k$ is $(k,k+1)$.

\subsection{Weights and oriented cup diagrams}\label{s9.2}

We will now present a concise version of the combinatorial diagrammatic 
description of the category $\mathcal{O}_0^\mathfrak{q}$ from 
\cite{BS11a,BS11b}, so we refer the reader to these paper for 
all technical details.

We denote by $\mathcal{W}_{n,k}$ the set of all words of length $n$
in the alphabet with two letters, $\vee$ and $\wedge$, in which the
letter $\wedge$ appears exactly $k$ times. The letter $\vee$ should 
be thought of as the head of an arrow pointing down, while the letter
$\wedge$ should be though of as the head of an arrow pointing up.
For example, we have:
\begin{displaymath}
\mathcal{W}_{4,2}=\{\wedge\wedge\vee\vee,\,\,
\wedge\vee\wedge\vee,\,\,\wedge\vee\vee\wedge,\,\,
\vee\wedge\wedge\vee,\,\,\vee\wedge\vee\wedge,\,\,\vee\vee\wedge\wedge
\} 
\end{displaymath}
We call the unique word in $\mathcal{W}_{n,k}$ which starts with $k$ 
wedges {\em dominant}. Hence, in the above example, the dominant word
is $\wedge\wedge\vee\vee$. 

The group $\mathbf{S}_n$ acts {on} $\mathcal{W}_{n,k}$ by permuting the positions
of the letters in a word. This action induces a natural bijection $\Phi$
between $W^\mathfrak{q}_\mathrm{short}$ and $\mathcal{W}_{n,k}$ which sends
$w\in W^\mathfrak{q}_\mathrm{short}$ to the image of the dominant word
under $w^{-1}$.

Given an element $\lambda\in\mathcal{W}_{n,k}$, we can form an
{\em oriented cup diagram} by attaching to letters of $\lambda$ ends
of vertically falling strings and cups that altogether form a planar 
and oriented diagram. In particular, the requirement to be oriented
means that the two ends of a cup should be attached to different
letters (i.e., one to a $\wedge$ and the other one to a $\vee$).
For example, here are the six oriented cup diagrams that can be drawn
for the elements $\wedge\vee\wedge\vee$:
\begin{displaymath}
\xymatrix@R=5mm@C=5mm{\wedge\ar@{-}[d]&\vee\ar@{-}[d]&\wedge\ar@{-}[d]&\vee\ar@{-}[d]\\&&&},
\qquad
\xymatrix@R=5mm@C=5mm{\wedge\ar@/_5mm/@{-}[r]&\vee&\wedge\ar@{-}[d]&\vee\ar@{-}[d]\\&&&},
\qquad
\xymatrix@R=5mm@C=5mm{\wedge\ar@{-}[d]&\vee\ar@{-}[d]&\wedge\ar@/_5mm/@{-}[r]&\vee\\&&&},
\end{displaymath}
\begin{displaymath}
\xymatrix@R=5mm@C=5mm{\wedge\ar@/_5mm/@{-}[r]&\vee&\wedge\ar@/_5mm/@{-}[r]&\vee\\&&&},
\qquad
\xymatrix@R=5mm@C=5mm{\wedge\ar@{-}[d]&\vee\ar@/_5mm/@{-}[r]&\wedge&\vee\ar@{-}[d]\\&&&},
\qquad
\xymatrix@R=5mm@C=5mm{\wedge\ar@/_8mm/@{-}[rrr]&\vee\ar@/_5mm/@{-}[r]&\wedge&\vee\\&&&}.
\end{displaymath}

The {\em degree} of an oriented cup diagram is the number of cups oriented
clockwise. 

\subsection{Standard modules}\label{s9.3}

An oriented cup diagram is called {\em admissible} provided that this diagram does not
contain any vertical strand under $\vee$ to the left of any vertical strand
under $\wedge$. For example, for the element $\wedge\vee\wedge\vee$, out of the six
oriented cup diagram above, all but the first one (i.e. but the one with four 
vertical strands) are admissible.

For each $\lambda\in\mathcal{W}_{n,k}$, there is a unique admissible oriented
cup diagram $\mathbf{d}(\lambda)$ for $\lambda$ of degree zero. It can be constructed 
recursively  using the following algorithm. If $\lambda$ does not contain any 
$\vee$ to the left of some $\wedge$, then $\mathbf{d}(\lambda)$ consists of 
vertical strings. Otherwise, $\lambda$ must contain a subword $\vee\wedge$
which we connect by a cup which becomes oriented counter-clockwise.
We can now remove this cup and proceed recursively.

Conversely, given an unoriented cup diagram $\mathbf{d}$ with at most 
$\min(k,m)$ cups, there is a unique $\lambda\in\mathcal{W}_{n,k}$ such that,
removing the orientation from $\mathbf{d}(\lambda)$, we obtain $\mathbf{d}$.
In order to construct $\lambda$, we orient each cup in $\mathbf{d}$
counter-clockwise and then write the remaining $\wedge$'s on the left 
and the remaining $\vee$'s on the right of the remaining places for letters.
For examples, if $n=7$ and $k=3$, then, for the diagram
\begin{displaymath}
\mathbf{d}=\xymatrix@C=5mm@R=5mm{\ar@/_7mm/@{-}[rrr]&\ar@/_5mm/@{-}[r]
&&&\ar@{-}[d]&\ar@/_5mm/@{-}[r]&\\&&&&&&}, 
\end{displaymath}
we get the corresponding element $\vee\vee\wedge\wedge\vee\vee\wedge\in\mathcal{W}_{7,3}$.

The following proposition is a reformulation (with adaptation to
our terminology) of \cite[Theorem~5.1]{BS11a}.

\begin{proposition}\label{prop9.11}
Let $x,y\in W^\mathfrak{q}_\mathrm{short}$ and $i\in\mathbb{Z}_{\geq 0}$. 
Then the simple module $L_y$ appears as a simple subquotient of the 
parabolic Verma module $\Delta^\mathfrak{q}_x$ in degree $i$ 
(and then, necessarily, with multiplicity one) if and only if
there is an admissible oriented cup diagram of degree $0$ for $\Phi(x)$
such that the associated unoriented cup diagram coincides with the
unoriented cup diagram associated with $\mathbf{d}(\Phi(y))$,
while the latter has degree $i$.
\end{proposition}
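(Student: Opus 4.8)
The plan is to reduce Proposition~\ref{prop9.11} to \cite[Theorem~5.1]{BS11a}, so the real work is a careful bookkeeping of the dictionary between the two combinatorial setups: the one of \cite{BS11a,BS11b}, phrased in terms of weights, cup diagrams and the diagram algebra $K_{k,m}$, and the one used here, phrased in terms of $W^\mathfrak{q}_\mathrm{short}$, the bijection $\Phi$, and graded lifts of parabolic Verma modules. First I would recall precisely how \cite[Theorem~5.1]{BS11a} reads in the source: for weights $\lambda,\mu$ it computes the graded decomposition numbers $[\Delta(\lambda):L(\mu)\langle i\rangle]$ of a cell/standard module for $K_{k,m}$, and the answer is $1$ exactly when $\mu\subset\lambda$ in the combinatorial sense that $\mathbf{d}(\lambda)$ can be reoriented to an admissible oriented cup diagram whose underlying unoriented diagram is that of $\mathbf{d}(\mu)$, the degree being recorded by the number of clockwise cups; otherwise it is $0$. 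I would then invoke the equivalence of categories established in \cite{BS11a,BS11b} between $\mathcal{O}_0^\mathfrak{q}$ (with its Koszul grading) and graded $K_{k,m}$-mod, checking that under this equivalence parabolic Verma modules go to standard/cell modules and simple modules go to simple modules, with the grading shifts matching the standard graded lifts fixed in Subsection~\ref{s2.5}. The only subtlety here is to make sure the grading conventions agree: the convention $\langle 1\rangle$ shifting degree $0$ to degree $-1$ must be compared with the convention in \cite{BS11a}, and if they differ by a sign one must insert the dualization; I would state explicitly which lift of $\Delta^\mathfrak{q}_x$ corresponds to $\Delta(\Phi(x))$.

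Next I would identify the two indexing conventions. The bijection $\Phi\colon W^\mathfrak{q}_\mathrm{short}\to\mathcal{W}_{n,k}$ defined in Subsection~\ref{s9.2} sends $w$ to $w^{-1}$ applied to the dominant word; I would check this is compatible with the labelling of simples used in \cite{BS11a} — i.e.\ that $L_w$ in $\mathcal{O}_0^\mathfrak{q}$ corresponds to the simple $K_{k,m}$-module labelled by $\Phi(w)$ — by comparing dominance orders: the dominant word $\wedge^k\vee^m$ corresponds to the antidominant/dominant weight, which is $\Phi(e)$, and on both sides the Bruhat order on $W^\mathfrak{q}_\mathrm{short}$ matches the combinatorial order on $\mathcal{W}_{n,k}$ used to order decomposition matrices. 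Once the dictionary $w\leftrightarrow\Phi(w)$, $\Delta^\mathfrak{q}_w\leftrightarrow\Delta(\Phi(w))$, $L_w\leftrightarrow L(\Phi(w))$ is nailed down, the statement of Proposition~\ref{prop9.11} is literally \cite[Theorem~5.1]{BS11a} transported across the equivalence: "$L_y$ appears in $\Delta^\mathfrak{q}_x$ in degree $i$ with multiplicity one" becomes "$[\Delta(\Phi(x)):L(\Phi(y))\langle i\rangle]=1$", and the cup-diagram criterion is exactly the one in the cited theorem, with the phrase "admissible oriented cup diagram of degree $0$ for $\Phi(x)$" being the reorientation of $\mathbf{d}(\Phi(x))$ and "the unoriented cup diagram associated with $\mathbf{d}(\Phi(y))$" being $\mathbf{d}(\Phi(y))$ with orientation forgotten.

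Finally I would verify the parenthetical "necessarily with multiplicity one": this is built into \cite[Theorem~5.1]{BS11a}, where all nonzero graded decomposition numbers equal $1$ (the category is, in Brundan–Stroppel's terminology, of "graded multiplicity one" type because the relevant Kazhdan–Lusztig polynomials are monomials in this parabolic setting), so once the transport is justified nothing further is needed. I expect the main obstacle to be purely notational rather than mathematical: aligning the three conventions — Brundan–Stroppel's weight/cup-diagram labels, our $\Phi$ with its $w^{-1}$-twist and choice of dominant word, and the grading-shift sign convention of Subsection~\ref{s2.5} — so that the "degree $i$" on both sides genuinely coincides and the admissibility condition (no vertical strand under $\vee$ left of a vertical strand under $\wedge$) matches the parabolic-$\mathcal{O}$ side. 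Getting the orientation of the cup in the recursive construction of $\mathbf{d}(\lambda)$ (counter-clockwise on the subword $\vee\wedge$) consistent with the clockwise-cup degree count is the one place where an off-by-one or a sign could slip in, so I would spell that check out explicitly on a small example such as $n=4$, $k=2$ before declaring the proposition reduced.
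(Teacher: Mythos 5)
Your proposal matches the paper's approach exactly: the paper gives no separate proof, stating the proposition as "a reformulation (with adaptation to our terminology) of [BS11a, Theorem~5.1]," which is precisely the reduction-plus-dictionary you describe. Your careful attention to the grading conventions and the $w^{-1}$-twist in $\Phi$ is exactly the bookkeeping the paper leaves implicit.
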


Let us consider the example of $n=4$ and $k=2$. In this example, we have
\begin{displaymath}
W^\mathfrak{q}_\mathrm{short}=
\{e,s_2,s_2s_1,s_2s_3,s_2s_1s_3,s_2s_1s_3s_2\}.
\end{displaymath}
For the element $x=e$, here are the corresponding admissible oriented cup diagrams:
\begin{displaymath}
\xymatrix@R=5mm@C=5mm{\wedge\ar@{-}[d]&\wedge\ar@{-}[d]&\vee\ar@{-}[d]&\vee\ar@{-}[d]\\&&&},
\qquad
\xymatrix@R=5mm@C=5mm{\wedge\ar@{-}[d]&\wedge\ar@/_5mm/@{-}[r]&\vee&\vee\ar@{-}[d]\\&&&},
\qquad
\xymatrix@R=5mm@C=5mm{\wedge\ar@/_7mm/@{-}[rrr]&\wedge\ar@/_5mm/@{-}[r]&\vee&\vee\\&&&}.
\end{displaymath}
The corresponding degrees, from left to right are: $0$, $1$ and $2$.
For each underlying unoriented cup diagram, we can now write the corresponding words
for which the diagram becomes of degree zero:
\begin{displaymath}
\xymatrix@R=5mm@C=5mm{\wedge\ar@{-}[d]&\wedge\ar@{-}[d]&\vee\ar@{-}[d]&\vee\ar@{-}[d]\\&&&},
\qquad
\xymatrix@R=5mm@C=5mm{\wedge\ar@{-}[d]&\vee\ar@/_5mm/@{-}[r]&\wedge&\vee\ar@{-}[d]\\&&&},
\qquad
\xymatrix@R=5mm@C=5mm{\vee\ar@/_7mm/@{-}[rrr]&\vee\ar@/_5mm/@{-}[r]&\wedge&\wedge\\&&&}.
\end{displaymath}
We see that the first word correspond to $e$, the second to $s_2$ and the third
to $s_2s_1s_3s_2$. We conclude that $\Delta_e^\mathfrak{q}$ has
$L_e$ in degree $0$, then it has $L_{s_2}$ in degree $1$ and, finally,
$L_{s_2s_1s_3s_2}$ in degree $2$.

For the element $x=s_2$, we saw five admissible oriented 
cup diagrams in Subsection~\ref{s9.2}. From this
we conclude that $\Delta^\mathfrak{q}_{s_2}$ has $L_{s_2}$ in degree $0$,
then $L_{s_2s_1}$, $L_{s_2s_3}$ and $L_{s_2s_1s_3s_2}$ in degree $1$ and,
finally, $L_{s_2s_1s_3}$ in degree $2$.

\subsection{Thin standard modules}\label{s9.4}

Set $\mathtt{a}=\min(k,m)$. For $w\in W^\mathfrak{q}_\mathrm{short}$, we will say
that the module $\Delta^\mathfrak{q}_w$ is {\em thin} provided that there is a
unique $y\in W^\mathfrak{q}_\mathrm{short}$ such that
\begin{itemize}
\item $L_y$ is a subquotient of $\Delta^\mathfrak{q}_w$;
\item the diagram $\mathbf{d}(\Phi(y))$ contains exactly $\mathtt{a}$ cups.
\end{itemize}
We note that, for $w\in W^\mathfrak{q}_\mathrm{short}$, the number of cups in 
$\mathbf{d}(\Phi(w))$ coincides with the value of Lusztig's $\mathbf{a}$-function
on $w$, see \cite{Lu}. Hence the value $\mathtt{a}$ is the maximum value which 
the $\mathbf{a}$-function attains at the elements of $W^\mathfrak{q}_\mathrm{short}$.
By a result of Irving, see \cite[Proposition~4.3]{Ir},  any socular constituent
$L_y$ of any $\Delta^\mathfrak{q}_w$, where $w\in W^\mathfrak{q}_\mathrm{short}$,
has the property that the diagram $\mathbf{d}(\Phi(y))$ contains exactly $\mathtt{a}$ cups.
Consequently, a thin parabolic Verma module must have simple socle.

It is easy to check, by a direct computation, that in our running example
$n=4$ and $k=2$, the list of all thin parabolic Verma modules looks as follows:
\begin{displaymath}
\Delta^\mathfrak{q}_e,\quad
\Delta^\mathfrak{q}_{s_2s_1},\quad
\Delta^\mathfrak{q}_{s_2s_3},\quad
\Delta^\mathfrak{q}_{s_2s_1s_3s_2}.
\end{displaymath}

\subsection{Formulation}\label{s9.5}

Let $n$ and $k$ be as above. Denote by 
$\mathcal{Y}_{n,k}$ the set of all elements $w\in W^\mathfrak{q}_\mathrm{short}$
such that the word $\Phi(w)$ has the following property:
\begin{itemize}
\item if $k< n-k$, then all $\vee$'s in $w$ appear next to each other;
\item if $k> n-k$, then all $\wedge$'s in $w$ appear next to each other;
\item if $k=n-k$, then either all $\vee$'s or all $\wedge$'s in $w$
appear next to each other.
\end{itemize}
For example, if $n=5$ and $k=2$, then $k=2<3=n-k$ and
\begin{displaymath}
\mathcal{Y}_{5,2}=\{\wedge\wedge\vee\vee\vee,\quad\wedge\vee\vee\vee\wedge,\quad
\vee\vee\vee\wedge\wedge\}. 
\end{displaymath}
At the same, time, if $n=4$ and $k=2$, then $k=n-k$ and we have:
\begin{displaymath}
\mathcal{Y}_{4,2}=\{\wedge\wedge\vee\vee,\quad\wedge\vee\vee\wedge,\quad
\vee\wedge\wedge\vee,\quad\vee\vee\wedge\wedge\}. 
\end{displaymath}

We can now formulate our main result for maximal parabolic subalgebras.

\begin{theorem}\label{mainthm2}
Let $n$ and $k$ be as above.

\begin{enumerate}[$($a$)$]
\item\label{maintheorem2.1} If $k\in\{1,n-1,\frac{n}{2}\}$, then the only 
$w\in W^\mathfrak{q}_\mathrm{short}$ for which $\mathbf{K}(\Delta^\mathfrak{q}_w)=
\mathtt{true}$ are $w=e$ and $w=w_0^\mathfrak{q}w_0$.
\item\label{maintheorem2.2} If $k\neq\frac{n}{2}$, 
then, for $w\in W^\mathfrak{q}_\mathrm{short}$, we have
$\mathbf{K}(\Delta^\mathfrak{q}_w)=\mathtt{true}$ if and only if $w\in\mathcal{Y}_{n,k}$.
\end{enumerate}
\end{theorem}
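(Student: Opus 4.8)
The plan is to reduce Kostant's problem for the parabolic Verma module $\Delta^\mathfrak{q}_w$ to Kostant's problem for a simple highest weight module indexed by a fully commutative permutation, and then invoke the main result of \cite{MMM}. The starting point is the diagrammatic machinery of Subsection~\ref{s9.2}--\ref{s9.4}: by Irving's result, every thin $\Delta^\mathfrak{q}_w$ has simple socle, say $L_{\sigma(w)}$, where $\mathbf{d}(\Phi(\sigma(w)))$ has exactly $\mathtt{a}=\min(k,n-k)$ cups. I would first show that for $w\notin \mathcal{Y}_{n,k}$ the module $\Delta^\mathfrak{q}_w$ fails the analogue of condition \eqref{mainthm.2} of Theorem~\ref{mainthm}: using Proposition~\ref{prop9.11} one counts penultimate-type constituents $L_y$ with $\mathbf{d}(\Phi(y))$ of maximal cup-number, and whenever $\Phi(w)$ is not of the special ``all $\vee$'s (resp.\ $\wedge$'s) adjacent'' shape, there are at least two such $y$ lying in the same right Kazhdan--Lusztig cell but different left cells; then the same cell-combinatorial argument as in Case~1 of Subsection~\ref{s5.4} (applying $\theta_{y^{-1}}$, which kills all but one simple in the relevant cell, and landing on the Duflo projective $P^\mathfrak{q}_d$) gives $\theta_{x^{-1}}\Delta^\mathfrak{q}_w\cong\theta_{y^{-1}}\Delta^\mathfrak{q}_w$, forcing $\mathbf{K}(\Delta^\mathfrak{q}_w)=\mathtt{false}$ via the birepresentation reformulation of Subsection~\ref{s3.5}.

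For the positive direction, $w\in\mathcal{Y}_{n,k}$, I would argue that $\Delta^\mathfrak{q}_w$ is thin and that its simple socle $L_{\sigma(w)}$ is indexed by a fully commutative permutation (since $\mathbf{d}(\Phi(\sigma(w)))$ is a non-nested system of $\mathtt{a}$ cups, the corresponding Duflo involution sits in the small or penultimate cell and its index is fully commutative — bigrassmannian-type, in fact). By \cite{MMM}, $\mathbf{K}(L_{\sigma(w)})=\mathtt{true}$ precisely for these. The remaining task is to transfer positivity from the simple socle to the whole parabolic Verma module. Here I would use Lemma~\ref{lem3.5-1} together with twisting functors: concretely, I expect $\Delta^\mathfrak{q}_w$ for $w\in\mathcal{Y}_{n,k}$ to be obtained from $\Delta^\mathfrak{q}_e$ (for which $\mathbf{K}=\mathtt{true}$ by \cite{Ja}, being a quotient of a Verma) or from the thin ``other end'' module by applying a twisting functor that is acyclic and preserves membership in $\mathcal{O}$; the diagrammatic description of how cup diagrams transform under $s_i$-translation should make this transparent. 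The endpoint cases $w=e$ (projective, quotient of Verma) and $w=w_0^\mathfrak{q}w_0$ (simple parabolic Verma, covered by \cite{GJ}) are handled separately and unconditionally.

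For part~\eqref{maintheorem2.1}, I would check that when $k\in\{1,n-1,\frac n2\}$ the set $\mathcal{Y}_{n,k}$ collapses: for $k=1$ (resp.\ $k=n-1$) the words with all $\vee$'s (resp.\ all $\wedge$'s) adjacent and only one $\wedge$ (resp.\ $\vee$) are exactly the dominant word $\Phi(e)$ and its ``reverse'' $\Phi(w_0^\mathfrak{q}w_0)$; for $k=\frac n2$ the two families in the definition of $\mathcal{Y}_{n,k}$ still intersect down to just these two extreme words. This is a short combinatorial verification once part~\eqref{maintheorem2.2}-type analysis is in place, except that $k=\frac n2$ is not covered by \eqref{maintheorem2.2}, so one must rerun the negative argument directly: for $w\notin\{e,w_0^\mathfrak{q}w_0\}$ one exhibits the two distinct same-right-cell constituents as above. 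I expect the main obstacle to be the positive transfer step — verifying that each $\Delta^\mathfrak{q}_w$ with $w\in\mathcal{Y}_{n,k}$ is linked to a known-positive module by a single acyclic twisting functor preserving $\mathcal{O}$-membership, rather than merely having positive-Kostant socle; in the worst case one falls back on checking $\mathbf{K}$ directly through $\mathrm{add}(\cP\,\Delta^\mathfrak{q}_w)$ using the explicit $\mathrm{Hom}$-spaces available from \cite{BS11a,BS11b}.
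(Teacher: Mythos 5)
Your treatment of the non-thin case (two maximal-$\mathbf{a}$-value constituents in the same right cell, hence $\theta_{x^{-1}}\Delta^\mathfrak{q}_w\cong\theta_{y^{-1}}\Delta^\mathfrak{q}_w$) matches the paper's Subsection~\ref{s9.6}, and identifying \cite{MMM} as the key input for the socle is right. But there are genuine gaps in both remaining steps. First, the case $k=\frac{n}{2}$ is mishandled. The set $\mathcal{Y}_{n,n/2}$ does \emph{not} collapse to $\{e,w_0^\mathfrak{q}w_0\}$: the paper's own example gives $\mathcal{Y}_{4,2}$ with four elements, and, e.g., $\Delta^\mathfrak{q}_{s_2s_1}$ is thin yet Kostant negative. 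Your fallback --- ``exhibit two distinct same-right-cell constituents'' --- cannot work for these modules precisely because they are thin: there is only one constituent of maximal $\mathbf{a}$-value, and for lower cells the functor $\theta_{x^{-1}}$ no longer isolates a single simple. What actually happens (Proposition~\ref{prop9.17} and Case~1 of Subsection~\ref{s9.7}) is that for $k=\frac{n}{2}$ the thin $\lambda$ with flip number $3$ satisfy $a_2=a_1+a_3$, so the socle's cup diagram has two non-nested nests of cups next to each other and the socle $L_u$ is Kostant \emph{negative} by \cite{MMM}; one then imports the explicit $x\neq y$ with $\theta_xL_u\cong\theta_yL_u$ from \cite[Section~5.5]{MMM}, checks $\theta_x\Delta^\mathfrak{q}_w\cong\theta_xL_u$, and deduces a degree-zero map $\theta_x\Delta^\mathfrak{q}_w\to\theta_y\Delta^\mathfrak{q}_w$ that cannot be the evaluation of a map of functors. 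Your proposal, which assumes the socle of a thin parabolic Verma module is always Kostant positive, would wrongly declare these modules positive.

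Second, the positive transfer step is not merely ``the main obstacle'' --- as written it fails. Twisting functors realize $\Delta^\mathfrak{q}_w$ as $\top_w\Delta^{\mathfrak{q}'}_e$ only when $w$ conjugates $W_{\mathfrak{q}'}$ onto $W_\mathfrak{q}$ with additive lengths, which for a maximal parabolic singles out only $w=e$ and $w=w_0^\mathfrak{q}w_0$ (compare Proposition~\ref{prop8.1}); it does not reach, say, the element of $\mathcal{Y}_{5,2}$ with $\Phi(w)=\wedge\vee\vee\vee\wedge$. The paper's actual argument (Case~2 of Subsection~\ref{s9.7}) is different: for $k\neq\frac{n}{2}$ one has $a_2>a_1+a_3$, the socle $L_u$ is Kostant positive by \cite{MMM}, and positivity is transferred via the injective restriction map $\mathrm{Hom}_\mathfrak{g}(\theta_x\Delta^\mathfrak{q}_w,\theta_y\Delta^\mathfrak{q}_w)\to\mathrm{Hom}_\mathfrak{g}(\theta_xL_u,\theta_yL_u)$ together with the surjectivity of $\mathrm{Hom}_{\ccP}(\theta_x,\theta_y)$ onto the target. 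You would need to supply this (or an equivalent) argument to close the positive direction.
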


We note that the case $k\in\{1,n-1\}$ is covered by both statements 
of Theorem~\ref{mainthm2}.

The next two subsections are dedicated to prove Theorem~\ref{mainthm2}.

\subsection{Non-thin parabolic Verma modules are Kostant negative}\label{s9.6}

If $\Delta_w^\mathfrak{q}$ is not thin, for some $w\in W^\mathfrak{q}_\mathrm{short}$,
then there exist different $x,y\in W^\mathfrak{q}_\mathrm{short}$ 
of maximal $\mathbf{a}$-value such that both
$L_x$ and $L_y$ are subquotients of $\Delta_w^\mathfrak{q}$ (necessarily with
multiplicity $1$). Let $d$ be the unique involution in the Kazhdan-Lusztig
right cell of $x$ (which coincides with the corresponding cell for $y$).
Then
\begin{displaymath}
\theta_{x^{-1}}\Delta_w^\mathfrak{q}\cong
\theta_{x^{-1}}L_x\cong
\theta_dL_d\cong \theta_{y^{-1}}L_y\cong
\theta_{y^{-1}}\Delta_w^\mathfrak{q}.
\end{displaymath}
Since $\theta_{x^{-1}}\not\cong\theta_{y{-1}}$ while
$\theta_{x^{-1}}\Delta_w^\mathfrak{q}\cong \theta_{y^{-1}}\Delta_w^\mathfrak{q}$,
from Subsection~\ref{s3.5} it follows that 
$\mathbf{K}(\Delta_w^\mathfrak{q})=\mathtt{false}$.

\subsection{Kostant problem for thin parabolic Verma modules}\label{s9.7}

After the observation in the previous subsection, we are left to consider
thin parabolic Verma modules. We start with a classification of such modules.

\begin{proposition}\label{prop9.17}
For $w\in W^\mathfrak{q}_\mathrm{short}$, the module
$\Delta_w^\mathfrak{q}$ is thin if and only if $w\in \mathcal{Y}_{n,k}$.
\end{proposition}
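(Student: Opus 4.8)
The plan is to characterize combinatorially, via the diagrammatics of \cite{BS11a,BS11b}, exactly when the condition in Subsection~\ref{s9.4} fails, i.e.\ when more than one socular-type constituent $L_y$ (with $\mathbf{d}(\Phi(y))$ having the maximal number $\mathtt{a}=\min(k,m)$ of cups) occurs in $\Delta^\mathfrak{q}_w$. By Proposition~\ref{prop9.11}, the simple subquotients $L_y$ of $\Delta^\mathfrak{q}_w$ are in bijection with the admissible oriented cup diagrams of degree $0$ for $\lambda=\Phi(w)$: each such diagram, after forgetting the orientation, determines a $y$ via the reverse construction of Subsection~\ref{s9.3}, and the number of cups of the resulting unoriented diagram equals the $\mathbf{a}$-value of $y$. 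So $\Delta^\mathfrak{q}_w$ is thin iff $\lambda$ admits a \emph{unique} admissible oriented degree-$0$ cup diagram with exactly $\mathtt{a}$ cups. The first step is therefore to translate "admissible oriented degree-$0$ cup diagram for $\lambda$ with $\mathtt{a}$ cups" into an explicit combinatorial condition on the word $\lambda$.

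The key structural observation I would establish is: an admissible degree-$0$ oriented cup diagram for $\lambda$ is obtained by repeatedly matching a subword $\vee\wedge$ by a counter-clockwise cup (exactly as in the construction of $\mathbf{d}(\lambda)$), except that we are allowed to \emph{stop early} and leave the remaining $\wedge$'s and $\vee$'s as vertical strands, subject to admissibility (no vertical $\vee$ to the left of a vertical $\wedge$). The canonical one $\mathbf{d}(\lambda)$ matches maximally; any other admissible degree-$0$ diagram is a "partial" matching. The number of cups is maximal ($=\mathtt{a}$) precisely when the matching is as large as possible, and when $k\le m$ (say) this forces every $\wedge$ to be an endpoint of a cup — so the freedom left is only in \emph{which} $\vee$'s get used. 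Thus I would argue: the maximal-cup admissible degree-$0$ diagrams correspond bijectively to ways of nesting/pairing all the $\wedge$'s into cups with $\vee$'s so that the diagram stays planar and admissible, and there is a unique such configuration iff the $\vee$'s not forced into cups must all lie in one contiguous block — equivalently, iff in $\lambda$ all $\vee$'s appear consecutively (when $k<m$), all $\wedge$'s appear consecutively (when $k>m$), or one of the two holds (when $k=m$, where both $k$ and $m$ wedges resp.\ vees could be "the" block). This is exactly the defining condition of $\mathcal{Y}_{n,k}$.

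Concretely I would run the argument in two directions. For the "if" direction: if $\lambda\in\mathcal{Y}_{n,k}$ — say $k<m$ and all $\vee$'s are contiguous — then the word looks like $\wedge^a \vee^{n-k} \wedge^{k-a}$ for some $a$, and one checks directly that the only way to draw an admissible degree-$0$ diagram with $k=\mathtt{a}$ cups is to nest the $a$ left wedges and the $k-a$ right wedges into the block of $\vee$'s in the unique planar admissible fashion; hence uniqueness, so $\Delta^\mathfrak{q}_w$ is thin. For the "only if" direction: if the $\vee$'s (resp.\ $\wedge$'s, in the relevant regime) are \emph{not} contiguous, I exhibit two distinct maximal-cup admissible degree-$0$ diagrams — typically by taking a "gap" $\wedge$ sitting between two groups of $\vee$'s and pairing it either with a $\vee$ on its left or with a $\vee$ on its right, then completing both to full admissible matchings — giving two different $y$'s, so $\Delta^\mathfrak{q}_w$ is not thin. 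Here I must be careful with the admissibility constraint, which rules out some naive re-pairings; checking that a valid second diagram always exists when the relevant block is broken is the crux.

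The main obstacle I expect is precisely this last point: controlling the interaction between planarity and admissibility when constructing the "second" maximal-cup diagram, and handling the boundary case $k=m$ (where either the $\vee$-block or the $\wedge$-block suffices, and one must make sure the disjunction in the definition of $\mathcal{Y}_{n,k}$ matches the "unique diagram" count — in particular that when $k=m$ a word can satisfy the condition via its $\wedge$'s even though its $\vee$'s are scattered, and that this still yields a unique maximal diagram). I would organize the proof around the explicit shape of $\lambda$ dictated by the $\mathcal{Y}_{n,k}$ condition and dispatch the finitely many shape-types, using the $n=4$, $k=2$ example of Subsection~\ref{s9.4} as a sanity check against the list $\Delta^\mathfrak{q}_e,\Delta^\mathfrak{q}_{s_2s_1},\Delta^\mathfrak{q}_{s_2s_3},\Delta^\mathfrak{q}_{s_2s_1s_3s_2}$, which indeed matches $\mathcal{Y}_{4,2}$.
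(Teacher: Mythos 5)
Your overall strategy coincides with the paper's: translate thinness into uniqueness of a maximal-cup admissible oriented cup diagram for $\lambda=\Phi(w)$, verify uniqueness directly for the words of $\mathcal{Y}_{n,k}$, and otherwise produce two distinct maximal-cup diagrams by re-pairing a letter sitting in a ``gap''. The paper packages the second half via the signature $(a_1,\dots,a_l)$ of $\lambda$: if some internal $a_i<a_{i-1}+a_{i+1}$, one caps adjacent $\vee\wedge$ or $\wedge\vee$ pairs until a subword $\vee\wedge\vee$ or $\wedge\vee\wedge$ remains, whose middle letter can be capped to either side; the resulting inequality $a_i\ge a_{i-1}+a_{i+1}$ forces the flip number to be at most $3$, and for flip number $3$ forces $a_2\ge a_1+a_3$, which is exactly membership in $\mathcal{Y}_{n,k}$. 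This is the same construction you sketch, but the recursion disposes of the point you correctly identify as the crux (completing the second pairing admissibly and planarly), which your proposal leaves unexecuted.

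There is also a concrete error in your first reduction. You restrict throughout to admissible oriented cup diagrams of degree $0$ for $\lambda$. By the conventions of Subsection~\ref{s9.3}, a degree-$0$ diagram for $\lambda$ has only counter-clockwise cups, i.e.\ cups reading $\vee\cdots\wedge$ from left to right, and the maximal such matching is $\mathbf{d}(\lambda)$ itself. Under your criterion the dominant word $\wedge^k\vee^m$ would admit no degree-$0$ diagram with $\mathtt{a}$ cups, so $\Delta^\mathfrak{q}_e$ would fail to be thin --- but it is thin, its unique socular constituent corresponding to the fully nested clockwise diagram of degree $\mathtt{a}$. The correct dictionary, which is what Proposition~\ref{prop9.11} together with the worked example for $n=4$, $k=2$ actually gives, is that the subquotients $L_y$ of $\Delta^\mathfrak{q}_w$ correspond to admissible oriented cup diagrams for $\lambda$ of \emph{arbitrary} degree; the degree-$0$ condition belongs to $\mathbf{d}(\Phi(y))$, not to the diagram decorated by $\lambda$. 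Your subsequent concrete analysis tacitly uses clockwise cups (for instance, nesting the left block of $\wedge$'s into the block of $\vee$'s), so the intended argument survives once the degree restriction is dropped; but as written the equivalence stated in your first paragraph is false, and every later step inherits the inconsistency.
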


\begin{proof}
For $\lambda\in\mathcal{W}_{n,k}$, define the {\em signature} of $\lambda$
as the vector $(a_1,a_2,\dots,a_l)$ of positive integers where
\begin{itemize}
\item $\lambda$ starts on the left with $a_1$ equal letter
$\vee$ (or $\wedge$);
\item $\lambda$ continues with $a_2$ equal letters $\wedge$ (resp. $\vee$);
\item and so on.
\end{itemize}
For example, the signature of $\vee\wedge\wedge\vee\wedge\wedge\wedge$
equals $(1,2,1,3)$. The number $l$ will be called the {\em flip number},
which is $4$ in our example.

It is straightforward to check that all $\lambda$ having the flip number
at most two index thin parabolic Verma modules. So, let us take 
some $\lambda$ with flip number $l>2$. Assume that there exists
$1<i<l$ such that $a_i<a_{i+1}+a_{i-1}$. We claim that in this case 
$\lambda$ indexes a parabolic Verma module that is not thin.

To prove this we need to construct at least two different oriented
cup diagrams with $\min(k,n-k)$ cups for this $\lambda$. Take
any connected subword $\wedge\vee$ or $\vee\wedge$ in which one of the
letters is in the $a_i$-part of $\lambda$, connect the two letters in this
subword by a cup and remove it. In this way we reduce $a_i$ by $1$
and we also reduce either $a_{i+1}$ or $a_{i-1}$ by $1$. We can do
this recursively such that we reduce $a_i$ to $1$ while keeping the
new $a_{i\pm 1}$'s positive. This gives a connected subword of the
form either $\vee\wedge\vee$ or $\wedge\vee\wedge$. Here we clearly 
see that we can connect the middle letter by a cup either to the 
letter on the right or to the letter on the left. This gives
rise to two different oriented cup diagrams as desired.

So, if $\lambda$ indexes a thin parabolic Verma module, then
$a_i\geq a_{i+1}+a_{i-1}$, for all $1<i<l$. In particular,
$a_i\geq a_{i+1}$ and $a_i\geq a_{i-1}$. Hence $l\leq 3$.
As the case $l\leq 2$ is already settled, it remains to 
consider the case $l=3$.

In the case $a_2< a_1+a_3$, the above argument implies
that $\lambda$ indexes a parabolic Verma module that is not think.
If $a_2\geq a_1+a_3$, we get $\lambda\in\mathcal{Y}_{n,k}$
by definition. It is easy to see that such $\lambda$ do indeed
index thin parabolic Verma modules.
\end{proof}

Now we need to consider two cases. Recall that, to avoid trivial cases, 
we assume $1\leq k\leq n-1$. Let $w$ be such that $\lambda=\Phi(w)$. Note that if the flip number of $\lambda$ is equal to $2$ then either $w=e$ or $w=w_0^{ \mathfrak{q}}w_0$. In the first case the corresponding parabolic Verma module is a quotient of projective Verma module, and hence it is Kostant positive. In the second  case the corresponding parabolic Verma module is simple which is Kostant positive. Thus we assume that the flip number of $\lambda$ is equal to $3$.

{\bf Case~1.} Assume $k=\frac{n}{2}$. In this case we have $a_2=a_1+a_3$.
This implies that any admissible oriented cup diagram for
$\lambda$ with $k$ cups must have two non-nested cups next
to each other. By the main result
of \cite{MMM}, the corresponding simple module $L_u$, which is the socle
of our parabolic Verma module, is Kostant negative.

Let us now analyze the module $\Delta^\mathfrak{q}_w$ in more detail.
There is a unique admissible oriented cup diagram for $\lambda$ with $k$
cups. Any other admissible oriented cup diagrams for $\lambda$ is 
obtained from this one by replacing some outer clock-wise oriented cups
by vertical strings. Here is an example, for $n=8$ and $k=4$,
of an original cup diagram
\begin{displaymath}
\xymatrix@C=3mm@R=3mm{
\vee\ar@/_7mm/@{-}[rrr]&\vee\ar@/_5mm/@{-}[r]&\wedge&\wedge
&\wedge\ar@/_7mm/@{-}[rrr]&\wedge\ar@/_5mm/@{-}[r]&\vee&\vee\\
&&&&&&&
}
\end{displaymath}
and the cup diagrams that can be obtained
from it by replacing some outer clock-wise oriented cups:
\begin{displaymath}
\xymatrix@C=3mm@R=3mm{
\vee\ar@/_7mm/@{-}[rrr]&\vee\ar@/_5mm/@{-}[r]&\wedge&\wedge
&\wedge\ar@{-}[d]&\wedge\ar@/_5mm/@{-}[r]&\vee&\vee\ar@{-}[d]\\
&&&&&&&
}\quad\text{ and }\quad\xymatrix@C=3mm@R=3mm{
\vee\ar@/_7mm/@{-}[rrr]&\vee\ar@/_5mm/@{-}[r]&\wedge&\wedge
&\wedge\ar@{-}[d]&\wedge\ar@{-}[d]&\vee\ar@{-}[d]&\vee\ar@{-}[d]\\
&&&&&&&
}
\end{displaymath}

In \cite[Section~5.5]{MMM} one can find an explicit 
construction of two different elements $x$ and $y$ in $W$
such that $\theta_x L_u\cong \theta_y L_u$. In the case of the
above (generic) example, these elements $x$ and $y$ are given 
by the following diagrams:
\begin{displaymath}
x=\xymatrix@C=3mm@R=9mm{\bullet\ar@/_5mm/@{-}[rrr]&\bullet\ar@/_3mm/@{-}[r]
&\bullet&\bullet&\bullet
&\bullet\ar@/_3mm/@{-}[r]&\bullet&\bullet\\
\bullet\ar@{-}[rrrru]&\bullet\ar@/^3mm/@{-}[r]&\bullet
&\bullet\ar@{-}[rrrru]&\bullet\ar@/^5mm/@{-}[rrr]&
\bullet\ar@/^3mm/@{-}[r]&\bullet&\bullet},\quad
y=\xymatrix@C=3mm@R=9mm{\bullet\ar@/_5mm/@{-}[rrr]&\bullet\ar@/_3mm/@{-}[r]
&\bullet&\bullet&\bullet\ar@{-}[d]
&\bullet\ar@/_3mm/@{-}[r]&\bullet&\bullet\ar@{-}[d]\\
\bullet\ar@/^5mm/@{-}[rrr]&\bullet\ar@/^3mm/@{-}[r]&\bullet
&\bullet&\bullet&\bullet\ar@/^3mm/@{-}[r]&\bullet&\bullet}
\end{displaymath}
Note the  outer right lower cup in $x$. For all admissible oriented cup diagrams
for $\lambda$, except the original one, this cup hits vertical strands.
This means that $\theta_x$ kills the corresponding simple module, in other words,
$\theta_x L_u\cong \theta_x\Delta_w^\mathfrak{q}$.
Similar arguments apply to general $w$ leading to the same conclusion
$\theta_x L_u\cong \theta_x\Delta_w^\mathfrak{q}$.

Since $\theta_x L_u\cong \theta_y L_u$
and $\theta_y L_u$ is a submodule of $\theta_y\Delta_w^\mathfrak{q}$, we thus 
have a non-zero degree zero morphism from $\theta_x\Delta_w^\mathfrak{q}$
to $\theta_y\Delta_w^\mathfrak{q}$. As $\theta_x$ and $\theta_y$ are
indecomposable and non-isomorphic and the endomorphism algebra of projective
functors is positively graded, it follows that this degree zero zero map 
from $\theta_x\Delta_w^\mathfrak{q}$
to $\theta_y\Delta_w^\mathfrak{q}$ cannot be the evaluation of some 
map from $\theta_x$ to $\theta_y$ at $\Delta_w^\mathfrak{q}$.
From Subsection~\ref{s3.5} we therefore conclude that the answer to
Kostant's problem for $\Delta_w^\mathfrak{q}$ is negative.

{\bf Case~2.} Assume $k\neq \frac{n}{2}$. In this case we have
$a_2>a_1+a_3$. This implies that any oriented cup diagram for
$\lambda$ with $\min(k,n-k)$ cups must have a vertical strand
that separates the two sets of nested cups. By the main result
of \cite{MMM}, the corresponding simple module $L_u$, which is the socle
of our parabolic Verma module, is Kostant positive. For different
$x$ and $y$ in $W$, we have an injective restriction map from
$\mathrm{Hom}_\mathfrak{g}(\theta_x\Delta_w^\mathfrak{q},\theta_y\Delta_w^\mathfrak{q})$
to $\mathrm{Hom}_\mathfrak{g}(\theta_xL_u,\theta_yL_u)$. 

Since $L_u$ is Kostant positive, $\mathrm{Hom}_{\ccP}(\theta_x,\theta_y)$
surjects on the latter and hence also on the former. This implies
that $\Delta_w^\mathfrak{q}$ is Kostant positive.

This completes the proof.

\subsection{Example: the extreme maximal parabolic}\label{s9.8}

Consider the parabolic subalgebra $\mathfrak{p}$ of $\mathfrak{sl}_n$ 
corresponding to the choice of all simple roots but the first one. Then
the Levi factor of $\mathfrak{p}$ is isomorphic to $\mathfrak{sl}_{n-1}$.
Setting $s_i=(i,i+1)$, we have:
\begin{displaymath}
W^{\mathfrak{p}}_{\mathrm{short}}=
\{e,s_1,s_1s_2,\dots,s_1s_2s_{3}\cdots s_{n-1}\}.
\end{displaymath}
It is well-known, see, for example, \cite{St2}, that the corresponding 
category $\mathcal{O}_0^\mathfrak{p}$ is equivalent to the category of
modules over the following quiver
\begin{displaymath}
\xymatrix{
n-1\ar@/^3pt/[r]^{\alpha_{n-1}}&n-2\ar@/^3pt/[r]^{\alpha_{n-2}}\ar@/^3pt/[l]^{\beta_{n-1}}&
\dots\ar@/^3pt/[r]^{\alpha_3}\ar@/^3pt/[l]^{\beta_{n-2}}
&2\ar@/^3pt/[r]^{\alpha_2}\ar@/^3pt/[l]^{\beta_3}&
1\ar@/^3pt/[r]^{\alpha_1}\ar@/^3pt/[l]^{\beta_2}&0\ar@/^3pt/[l]^{\beta_1}
}
\end{displaymath}
with the following relations:
\begin{displaymath}
\alpha_1\beta_1=0,\quad
\alpha_i\alpha_{i+1}=0,\quad
\beta_i\beta_{i-1}=0,\quad
\alpha_i\beta_i=\beta_{i-1}\alpha_{i-1}.
\end{displaymath}
Here the vertex $0$ corresponds to $e$ and, for $i>0$, the vertex $i$
corresponds to $s_1s_2\cdots s_i$.

In this relation, the parabolic Verma modules are the standard modules with the
(unique) quasi-hereditary structure:
\begin{displaymath}
\Delta_0=\xymatrix{0\ar[d]\\1},\qquad 
\Delta_1=\xymatrix{1\ar[d]\\2},\qquad 
\Delta_{n-2}=\xymatrix{n-2\ar[d]\\n-1},\qquad 
\Delta_{n-1}=\xymatrix{n-1}. 
\end{displaymath}

The $\mathbf{a}$-value of $e$ is zero, while the
$\mathbf{a}$-value of all other elements in 
$(\mathbf{S}_n)^\mathfrak{p}_\mathrm{short}$ is $1$.
It follows that the only thin parabolic Verma modules are
$\Delta_0$ and $\Delta_{n-1}$. From Theorem~\ref{mainthm2}
we obtain that these two parabolic Verma modules are the only
Kostant positive parabolic Verma modules in this case.

In fact, in this case one can extend Theorem~\ref{mainthm2}
in the following way:

\begin{proposition}\label{prop9.1}
For $w\in W^{\mathfrak{p}}_{\mathrm{short}}$, the following assertions are equivalent:
\begin{enumerate}[$($a$)$]
\item\label{prop9.1.1} $\mathbf{K}(\Delta^{\mathfrak{p}}_w)=\mathtt{true}$.
\item\label{prop9.1.2} For $x,y\in W$ such that $x\neq y$, 
$\theta_x \Delta^{\mathfrak{p}}_w\neq 0$ and
$\theta_y \Delta^{\mathfrak{p}}_w\neq 0$, we have $\theta_x \Delta^{\mathfrak{p}}_w\not\cong \theta_y \Delta^{\mathfrak{p}}_w$ (as ungraded modules).
\item\label{prop9.1.3} For all $x,y\in W$ such that $x\neq y$, 
$\theta_x \Delta^{\mathfrak{p}}_w\neq 0$ and
$\theta_y \Delta^{\mathfrak{p}}_w\neq 0$, we have 
$[\theta_x \Delta^{\mathfrak{p}}_w]\neq [\theta_y \Delta^{\mathfrak{p}}_w]\langle i\rangle$,
for $i\in\mathbb{Z}$,
in $\mathbf{Gr}[\mathcal{O}_0^\mathbb{Z}]$.
\item\label{prop9.1.4} For all $x,y\in W$ such that $x\neq y$, 
$\theta_x \Delta^{\mathfrak{p}}_w\neq 0$ and
$\theta_y \Delta^{\mathfrak{p}}_w\neq 0$, we have 
$[\theta_x \Delta^{\mathfrak{p}}_w]\neq [\theta_y \Delta^{\mathfrak{p}}_w]$
in $\mathbf{Gr}[\mathcal{O}_0]$.
\item\label{prop9.1.5} $w\in\{e,s_1s_2\cdots s_{n-1}\}$.
\item\label{prop9.1.6} The annihilator of $\Delta^{\mathfrak{p}}_w$ in $U(\mathfrak{g})$
is a primitive ideal.
\end{enumerate}
\end{proposition}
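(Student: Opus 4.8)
The plan is to establish the same dichotomy as in the proofs of Theorem~\ref{mainthm} and Theorem~\ref{mainthm2}: for $w\in\{e,s_1s_2\cdots s_{n-1}\}$ all six assertions hold, while for every other $w\in W^{\mathfrak{p}}_{\mathrm{short}}$ they all fail. Most of the scaffolding is already available. The equivalence \eqref{prop9.1.1}$\Leftrightarrow$\eqref{prop9.1.5} is precisely Theorem~\ref{mainthm2}\eqref{maintheorem2.1} applied with $k=1$ (note that the parabolic $\mathfrak{p}$ of Subsection~\ref{s9.8} coincides with $\mathfrak{q}_1$); the implications \eqref{prop9.1.4}$\Rightarrow$\eqref{prop9.1.3}$\Rightarrow$\eqref{prop9.1.2} are formal, as in Theorem~\ref{mainthm}; and, by the birepresentation-theoretic reformulation in Subsection~\ref{s3.5}, the failure of \eqref{prop9.1.2} forces the failure of \eqref{prop9.1.1}. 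Hence it suffices to prove: (i) for $w\in\{e,s_1s_2\cdots s_{n-1}\}$ the assertions \eqref{prop9.1.2}, \eqref{prop9.1.3}, \eqref{prop9.1.4}, \eqref{prop9.1.6} hold; (ii) for the remaining $w$ the assertions \eqref{prop9.1.2} and \eqref{prop9.1.6} fail.

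For the positive case $w=e$, recall from Subsection~\ref{s9.8} that $\Delta^{\mathfrak{p}}_e=\Delta_0$ is the indecomposable projective cover $P^{\mathfrak{p}}_e$ of $L_e$ in $\mathcal{O}_0^{\mathfrak{p}}$ (indeed $\Delta_e=P_e$ in $\mathcal{O}_0$, and the functor $Z\colon\mathcal{O}_0\to\mathcal{O}_0^{\mathfrak{p}}$ of taking the largest quotient lying in $\mathcal{O}_0^{\mathfrak{p}}$ sends $P_e$ to $P^{\mathfrak{p}}_e$). Since $Z$ commutes with tensoring by finite-dimensional modules, it commutes with all projective functors, so that $\theta_x\Delta^{\mathfrak{p}}_e\cong\theta_xZ(P_e)\cong Z(\theta_xP_e)\cong Z(P_x)$, which equals $P^{\mathfrak{p}}_x$ if $x\in W^{\mathfrak{p}}_{\mathrm{short}}$ and is zero otherwise (the latter because $L_x\notin\mathcal{O}_0^{\mathfrak{p}}$). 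Hence the non-zero modules among the $\theta_x\Delta^{\mathfrak{p}}_e$ are exactly the pairwise non-isomorphic indecomposable projectives of $\mathcal{O}_0^{\mathfrak{p}}$, whose classes (and graded classes, up to shift) are linearly independent in the corresponding Grothendieck groups because $\mathcal{O}_0^{\mathfrak{p}}$ is a highest weight category of finite global dimension; this gives \eqref{prop9.1.4}, hence \eqref{prop9.1.3} and \eqref{prop9.1.2}. For \eqref{prop9.1.6}, the module $\Delta^{\mathfrak{p}}_e=\Delta_0$ has simple socle $L_{s_1}$ and is a quotient of the Verma module $\Delta_e$, so by \cite[Proposition~5.1]{Ka} its annihilator coincides with that of its socle and is therefore primitive.

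For the positive case $w=w_0^{\mathfrak{p}}w_0=s_1s_2\cdots s_{n-1}$, the module $\Delta^{\mathfrak{p}}_w=L_w$ is simple, so \eqref{prop9.1.6} holds automatically, and \eqref{prop9.1.1} holds by \cite[Theorem~4.4]{GJ} (equivalently, by Theorem~\ref{mainthm2}). For the remaining assertions the key observation is that, as $w$ is the maximal element of $W^{\mathfrak{p}}_{\mathrm{short}}$, we have $\Delta^{\mathfrak{p}}_w=\nabla^{\mathfrak{p}}_w=L_w$, so $L_w$ is an indecomposable tilting module in $\mathcal{O}_0^{\mathfrak{p}}$; since projective functors are exact and preserve the subcategories of $\Delta$-filtered and of $\nabla$-filtered modules, each $\theta_xL_w$ is again a tilting module, and tilting modules in a highest weight category are determined up to isomorphism by their class in the Grothendieck group. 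Consequently all of \eqref{prop9.1.2}, \eqref{prop9.1.3}, \eqref{prop9.1.4} reduce to the single statement that distinct non-zero modules among the $\theta_xL_w$ have distinct classes in $\mathbf{Gr}[\mathcal{O}_0^\mathbb{Z}]$, even up to grading shift (which then, by specialising $v\mapsto 1$, also covers $\mathbf{Gr}[\mathcal{O}_0]$). This last point is to be verified by a direct, finite computation inside the explicitly described category $\mathcal{O}_0^{\mathfrak{p}}$ of Subsection~\ref{s9.8}: one has $\theta_{s_1}L_w=0$, since $s_1$ is the unique left descent of $w$, and one must evaluate the remaining $\theta_x$ on $L_w=L_{n-1}$ and record the resulting multisets of composition factors. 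I expect this verification — equivalently, the corresponding computation with the products $[\Delta^{\mathfrak{p}}_w]\cdot\underline{H}_x$ in the Hecke algebra of $\mathbf{S}_n$ — to be the main technical obstacle.

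Finally, for $w\in W^{\mathfrak{p}}_{\mathrm{short}}\setminus\{e,s_1s_2\cdots s_{n-1}\}$ we have $w=s_1s_2\cdots s_i$ with $1\le i\le n-2$, and from Subsection~\ref{s9.8} the module $\Delta^{\mathfrak{p}}_w=\Delta_i$ has exactly the two composition factors $L_{s_1\cdots s_i}$ and $L_{s_1\cdots s_{i+1}}$, both of $\mathbf{a}$-value $1=\mathtt{a}$ and lying in distinct Kazhdan--Lusztig left cells of a common right cell (equivalently, $w\notin\mathcal{Y}_{n,1}$, so $\Delta^{\mathfrak{p}}_w$ is not thin, cf.\ Proposition~\ref{prop9.17}). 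As in Subsection~\ref{s9.6}, writing $x$ and $y$ for the inverses of $s_1\cdots s_i$ and $s_1\cdots s_{i+1}$ and $d$ for the Duflo involution of their common right cell, we obtain $\theta_x\Delta^{\mathfrak{p}}_w\cong\theta_xL_{s_1\cdots s_i}\cong\theta_dL_d\cong\theta_yL_{s_1\cdots s_{i+1}}\cong\theta_y\Delta^{\mathfrak{p}}_w\neq 0$ with $x\neq y$; this kills \eqref{prop9.1.2}, \eqref{prop9.1.3}, \eqref{prop9.1.4}, and hence, by Subsection~\ref{s3.5}, also \eqref{prop9.1.1}. For \eqref{prop9.1.6}, the annihilators $\mathrm{Ann}_{U(\mathfrak{g})}(L_{s_1\cdots s_i})$ and $\mathrm{Ann}_{U(\mathfrak{g})}(L_{s_1\cdots s_{i+1}})$ are incomparable primitive ideals, and since these are the only two composition factors of $\Delta^{\mathfrak{p}}_w$, the ideal $\mathrm{Ann}_{U(\mathfrak{g})}(\Delta^{\mathfrak{p}}_w)$ lies between their product and their intersection and so cannot be prime — this is the argument of Case~1 in the proof of Theorem~\ref{mainthm}.
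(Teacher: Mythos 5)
Your overall strategy coincides with the paper's: positives via projectives (for $w=e$) and tilting modules (for $w=w_0^{\mathfrak{p}}w_0$), negatives via two distinct projective functors producing isomorphic nonzero images plus the incomparability of the annihilators of the two composition factors. The $w=e$ case and the negative cases are complete and correct (your prime-ideal argument via $\mathrm{Ann}(L_y)\mathrm{Ann}(L_x)\subseteq\mathrm{Ann}(\Delta^{\mathfrak{p}}_w)\subseteq\mathrm{Ann}(L_x)\cap\mathrm{Ann}(L_y)$ is in fact spelled out more carefully than in the paper, which only uses the containment in the intersection).

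There is, however, one genuine gap, and you flag it yourself: for $w=w_0^{\mathfrak{p}}w_0$ you reduce \eqref{prop9.1.2}--\eqref{prop9.1.4} to the statement that distinct nonzero modules among the $\theta_xL_w$ have distinct classes in the Grothendieck group, and then defer this to an unperformed ``direct, finite computation''. As written, the proof of the positive case $w=w_0^{\mathfrak{p}}w_0$ is therefore incomplete. No computation is needed: the nonzero $\theta_xL_{w_0^{\mathfrak{p}}w_0}$ are pairwise non-isomorphic \emph{indecomposable} tilting modules of $\mathcal{O}_0^{\mathfrak{p}}$ (this is the tilting analogue, under Ringel duality, of the statement you already used for $w=e$ about projectives; alternatively, it follows from $\mathbf{K}(L_{w_0^{\mathfrak{p}}w_0})=\mathtt{true}$ together with the birepresentation reformulation of Subsection~\ref{s3.5}, since an equivalence $\cP/\mathrm{Ann}_{\ccP}(M)\to\mathrm{add}(\cP M)$ sends the pairwise non-isomorphic indecomposable objects $\theta_x$ to pairwise non-isomorphic indecomposable objects $\theta_xM$ or to zero). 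Once indecomposability is known, the classes of the indecomposable tilting modules are linearly independent in $\mathbf{Gr}[{}^{\mathbb{Z}}\mathcal{O}_0^{\mathfrak{p}}]$ by unitriangularity with respect to the standard basis, which yields \eqref{prop9.1.3} and \eqref{prop9.1.4} directly; this is exactly how the paper closes the argument. With that one step supplied, your proof is complete.
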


\begin{proof}
We start with $w=e$. Then 
$\mathbf{K}(\Delta^{\mathfrak{p}}_e)=\mathtt{true}$ by Theorem~\ref{mainthm2}, which gives  Claim~\eqref{prop9.1.1}.
Applying (pairwise non-isomorphic) indecomposable projectives functors 
to $\Delta^{\mathfrak{p}}_e$ one gets either zero or (pairwise non-isomorphic)
indecomposable projectives in $\mathcal{O}_0^\mathfrak{p}$.
As the latter is a highest weight category, the images of these 
indecomposable projectives in the (graded) Grothendieck group are
linearly independent. This implies Claims~\eqref{prop9.1.2},
\eqref{prop9.1.3} and \eqref{prop9.1.4}. Since $L_e$ is the trivial module, the
annihilator of $\Delta^{\mathfrak{p}}_e$ coincides with the annihilator of 
$L_{s_1}$ and hence is a primitive ideal, giving Claim~\eqref{prop9.1.6}.

Next consider $w=s_1s_2\cdots s_{n-1}$. 
Again, $\mathbf{K}(\Delta^{\mathfrak{p}}_{w_0})=\mathtt{true}$
by Theorem~\ref{mainthm2},
which gives  Claim~\eqref{prop9.1.1}.
Applying (pairwise non-isomorphic) indecomposable projectives functors 
to $L_{w_0^\mathfrak{p}w_0}$ one gets either zero or (pairwise non-isomorphic)
indecomposable tilting modules in $\mathcal{O}_0^\mathfrak{p}$.
As the latter is a highest weight category, the images of these 
indecomposable projectives in the (graded) Grothendieck group are
linearly independent. This implies Claims~\eqref{prop9.1.2},
\eqref{prop9.1.3} and \eqref{prop9.1.4}. Since $\Delta^{\mathfrak{p}}_w$ is 
simple, its annihilator is a primitive ideal, giving Claim~\eqref{prop9.1.6}.

For $w\not\in\{e,s_1s_2\cdots s_{n-1}\}$, the corresponding 
$\Delta^{\mathfrak{p}}_w$ is not thin. It
has top $L_{w}$ and socle $L_{ws_i}$, for some $i$.
We have $\theta_{s_i}L_w=0$ and $\theta_{s_{i-1}}L_{ws_i}=0$ while
$\theta_{s_i}L_{ws_i}\cong \theta_{s_{i-1}s_i}L_w$. This implies that 
Claims~\eqref{prop9.1.1}--\eqref{prop9.1.4} fail. Also, as the annihilators
of $L_{w}$ and $L_{ws_i}$ are incomparable primitive ideals, it follows
that the annihilator of $\Delta^{\mathfrak{p}}_w$ is contained in the
intersection of these two primitive ideals and hence is not a primitive ideal.
Therefore Claim~\eqref{prop9.1.6} fails as well.

This completes the proof. 
\end{proof}

\section{Upshot: general observations and speculations}\label{s8}

\subsection{Preliminary definitions}\label{s8.1}

For a composition $\mu\models n$, we denote by $\mathbf{c}(\mu)\vdash n$
the corresponding partition of $n$. The partition $\mathbf{c}(\mu)$
is obtained from $\mu$ by ordering the parts of the latter in a weakly
decreasing order. For two compositions $\mu,\nu\models n$, we write
$\mu\sim\nu$ provided that $\mathbf{c}(\mu)=\mathbf{c}(\nu)$.

There is a natural bijection between compositions of $n$ and parabolic
subalgebras of $\mathfrak{sl}_n$ given as follows, for $\mu=(\mu_1,\mu_2,\dots)$,
the corresponding subalgebra $\mathfrak{p}_\mu$ has diagonal blocks of size
$\mu_1,\mu_2,\dots$ reading along the main diagonal from north-west to
south-east.

For $\mu=(\mu_1,\mu_2,\dots,\mu_k)\models n$, define the sets
\begin{displaymath}
X_1=\{1,2,\dots,\mu_1\}, X_2=\{\mu_1+1,\mu_1+2,\dots,\mu_1+\mu_2\}
\quad\text{ and so on}.
\end{displaymath}
We will call these sets the {\em blocks of $\mu$}.
Denote by $G_\mu$  the subgroup of $\mathbf{S}_n$ consisting of all
permutations $\pi$ satisfying the following property: 
for each $i\in\{1,2,\dots,k\}$, there is some $j\in\{1,2,\dots,k\}$
such that $|X_i|=|X_j|$ and $\pi$ sends the elements of $X_i$ to the
elements of $X_j$ preserving the natural order between these elements.
Here is an example of an element in $G_{(2,2,1,2)}$, where 
$X_1=\{1,2\}$, $X_2=\{3,4\}$, $X_3=\{5\}$ and $X_4=\{6,7\}$:
\begin{displaymath}
\xymatrix{
1\ar@{-}[drr]&2\ar@{-}[drr]&3\ar@{-}[drrr]&
4\ar@{-}[drrr]&5\ar@{-}[d]&6\ar@{-}[dlllll]&7\ar@{-}[dlllll]\\
1&2&3&4&5&6&7
}
\end{displaymath}

Let $\mu,\nu\models n$ be two compositions such that $\mu\sim\nu$.
Let $X_1,X_2,\dots,X_K$ be the blocks of $\mu$
and $Y_1,Y_2,\dots,Y_K$ be the blocks of $\nu$.
There is a unique $\tau\in S_k$ such that 
\begin{itemize}
\item $|X_i|=|Y_{\tau(i)}|$, for all $i$;
\item if $i<j$ and $|X_i|=|X_j|$, then $\tau(i)<\tau(j)$.
\end{itemize}
We denote by $\omega_{\mu,\nu}$ the unique element of $\mathbf{S}_n$
which sends, for each $i$, the elements of $X_i$ to the elements of 
$Y_{\tau(i)}$ preserving the natural order among these elements.
For example:
\begin{displaymath}
\omega_{(2,1,3,2),(3,2,2,1)}=
\xymatrix{
1\ar@{-}[drrr]&2\ar@{-}[drrr]&3\ar@{-}[drrrrr]&
4\ar@{-}[dlll]&5\ar@{-}[dlll]&6\ar@{-}[dlll]&7\ar@{-}[dl]&8\ar@{-}[dl]&\\
1&2&3&4&5&6&7&8
}
\end{displaymath}
Note that $\omega_{\mu,\nu}G_\mu=G_\nu\omega_{\mu,\nu}$.

The parabolic subgroup $W_\mu$ of $\mathbf{S}_n$ corresponding to the composition $\mu$
is the product $S_{X_1}\times S_{X_2}\times\dots\times S_{X_k}$ of symmetric groups.

\subsection{Some positive cases}\label{s8.2}

For $\mu\models n$, consider $\mathfrak{p}=\mathfrak{p}_\mu$.

\begin{proposition}\label{prop8.1}
For $w\in (\mathbf{S}_n)_\mathrm{short}^\mathfrak{p}$, we have
$\mathbf{K}(\Delta^\mathfrak{p}_w)=\mathtt{true}$ if
there is $\nu\models n$ such that $\mu\sim\nu$ and
$w\in G_\mu\omega_{\nu,\mu}$.
\end{proposition}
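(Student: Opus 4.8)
The plan is to reduce this to the already-established positive cases by exhibiting $\Delta^{\mathfrak{p}}_w$ (for $w\in G_\mu\omega_{\nu,\mu}$) as the image, under a twisting functor, of a parabolic Verma module for the conjugate parabolic $\mathfrak{p}_\nu$ that is known — or easily seen — to be Kostant positive, and then invoke Lemma~\ref{lem3.5-1}. The guiding principle is exactly the one used in Subsection~\ref{s5.3}: conjugating a parabolic by a ``block-order-preserving'' permutation does not change the Kostant answer, because such a conjugation is implemented by a twisting functor which is exact and acyclic on the relevant Verma filtrations.

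First I would set up the combinatorics. Write $w=\sigma\,\omega_{\nu,\mu}$ with $\sigma\in G_\mu$. Using the identity $\omega_{\mu,\nu}G_\mu=G_\nu\omega_{\mu,\nu}$ from Subsection~\ref{s8.1} (and $\omega_{\nu,\mu}=\omega_{\mu,\nu}^{-1}$), rewrite $w$ so that the ``$G$-part'' sits on the correct side; the point is that $\omega:=\omega_{\mu,\nu}$ satisfies $\mathfrak{p}_\mu=\omega\,\mathfrak{p}_\nu\,\omega^{-1}$ on the level of the relevant block structure, so that $\Delta^{\mathfrak{p}_\mu}_w$ and a suitable $\Delta^{\mathfrak{p}_\nu}_{w'}$ have the same Verma subquotients up to the relabelling induced by $\omega$. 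Concretely, I would start from the defining short exact sequence $0\to N\to \Delta_{w'}\to \Delta^{\mathfrak{p}_\nu}_{w'}\to 0$, where $N$ has a Verma flag by the $\Delta_{xw'}$ with $x$ ranging over the (nontrivial double-coset) elements of $W_\nu$. Then I would apply the twisting functor $\top_{\omega}$ (or an appropriate $\top_z$ for a length-additive factor $z$), check the length-additivity conditions $\ell(xw')=\ell(x)+\ell(w')$ and $\ell(\,\cdot\, w)=\ell(\cdot)+\ell(w)$ needed for $\top$ to send Verma modules to Verma modules, and conclude that $\top_{\omega}$ carries this sequence to $0\to N'\to \Delta_w\to \Delta^{\mathfrak{p}_\mu}_w\to 0$ with $N'$ again Verma-filtered. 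Because $\top_{\omega}$ is exact and acyclic on modules with Verma flags (Subsection~\ref{s2.45}), $\mathcal{L}\top_{\omega}(\Delta^{\mathfrak{p}_\nu}_{w'})\cong\top_{\omega}(\Delta^{\mathfrak{p}_\nu}_{w'})\in\mathcal{O}$, so Lemma~\ref{lem3.5-1} gives $\mathbf{K}(\Delta^{\mathfrak{p}_\mu}_w)=\mathbf{K}(\Delta^{\mathfrak{p}_\nu}_{w'})$.

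It then remains to choose $\nu$ and $w'$ so that the right-hand side is visibly $\mathtt{true}$. The natural choice is $\nu=\mu$ (so $\omega_{\nu,\mu}=e$) together with $w'$ the shortest representative whose parabolic Verma module is a quotient of the \emph{projective} Verma module $\Delta_e$ — i.e. one reduces, via a first twisting step exactly as in Subsection~\ref{s5.3}, to the case $w'=e$, where $\Delta^{\mathfrak{p}_\mu}_e$ is a quotient of $\Delta_e$ and hence Kostant positive by \cite[Section~7.32]{Ja}. Here one uses that every element of $G_\mu$ normalizes the block structure, so that repeated application of twisting functors indexed by the ``internal'' generators of $G_\mu$ moves $\Delta^{\mathfrak{p}_\mu}_w$ through Verma-filtered objects down to $\Delta^{\mathfrak{p}_\mu}_e$.

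I expect the main obstacle to be bookkeeping the length-additivity: one must verify that at each twisting step the relevant products $\ell(xv)=\ell(x)+\ell(v)$ hold, both for the ``small'' Verma submodules appearing in the kernel (so that the kernel stays Verma-filtered) and for the generator $v$ itself (so that $\top_v\Delta_{w'}\cong\Delta_{vw'}$). This is where the precise definition of $G_\mu$ as order-preserving block permutations, and of $W^{\mathfrak{p}}_{\mathrm{short}}$ as shortest coset representatives, does the work: an element of $G_\mu$ acting on the left of a shortest representative interacts predictably with the parabolic subgroup, and the required length identities follow from the standard combinatorics of double cosets $W_\mu\backslash W/W_\nu$ in the symmetric group. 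Once these are in place, the argument is a direct transcription of Subsections~\ref{s5.3} and~\ref{s5.4}, with $\hat G_k$ replaced by the larger group $G_\mu$ and $\mathcal{X}_{n,k}^+$ replaced by the ``$\omega_{\nu,\mu}$-class'' of identity-type representatives.
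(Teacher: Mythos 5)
Your strategy coincides with the paper's: the proof given there is literally ``similar to the one in Subsection~\ref{s5.3}'', i.e.\ realize $\Delta^{\mathfrak{p}_\mu}_w$ as $\top_w$ applied to a parabolic Verma module at the identity, which is a quotient of the dominant Verma module and hence Kostant positive by \cite[Section~7.32]{Ja}, and then invoke Lemma~\ref{lem3.5-1}. Your first two paragraphs set this up correctly.

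The one point to fix is in your last paragraph, where you declare ``the natural choice is $\nu=\mu$'' and reduce to $\Delta^{\mathfrak{p}_\mu}_e$. For $\nu\neq\mu$ this is not the right base case. Writing $w=\sigma\omega_{\nu,\mu}$ with $\sigma\in G_\mu$, conjugation by $w$ carries simple reflections of $W_\nu$ to simple reflections of $W_\mu$ (both $\sigma$ and $\omega_{\nu,\mu}$ preserve the order inside blocks), so $ws'=sw$ with $s\in S\cap W_\mu$ whenever $s'\in S\cap W_\nu$, and $\ell(sw)=\ell(w)+1$ because $w\in W^{\mathfrak{p}}_{\mathrm{short}}$. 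Hence $\top_w$ carries
\begin{displaymath}
0\to\textstyle\sum_{s'\in S\cap W_\nu}\Delta_{s'}\to\Delta_e\to\Delta^{\mathfrak{p}_\nu}_e\to 0
\qquad\text{to}\qquad
0\to\textstyle\sum_{s\in S\cap W_\mu}\Delta_{sw}\to\Delta_w\to\Delta^{\mathfrak{p}_\mu}_w\to 0,
\end{displaymath}
so the reduction terminates at $\Delta^{\mathfrak{p}_\nu}_e$ --- the parabolic Verma module at $e$ for the \emph{conjugate} parabolic $\mathfrak{p}_\nu$ --- and not at $\Delta^{\mathfrak{p}_\mu}_e$. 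Since $\Delta^{\mathfrak{p}_\nu}_e$ is also a quotient of $\Delta_e$, it is Kostant positive for exactly the reason you cite, so the conclusion is unaffected; but the reduction as you wrote it does not parse when $\omega_{\nu,\mu}\neq e$. Relatedly, the length-additivity you flag as the main obstacle is not a double-coset computation: it is just the statement $sw>w$ for $s$ simple in $W_\mu$, which is part of the hypothesis $w\in(\mathbf{S}_n)^{\mathfrak{p}}_{\mathrm{short}}$ (and holds for all of $G_\mu\omega_{\nu,\mu}$ by the order-preserving property of its elements).
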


\begin{proof}
The proof is similar to the one in Subsection~\ref{s5.3}.
\end{proof}

Note that, if $\nu$ and $\nu'$ are different compositions of $n$ such that
$\mu\sim\mu$ and $\nu'\sim\mu$, then the sets
$G_\mu\omega_{\nu,\mu}$ and $G_\mu\omega_{\nu',\mu}$
are disjoint.

\subsection{Some negative cases}\label{s8.3}

For $\mu\models n$, consider $\mathfrak{p}=\mathfrak{p}_\mu$.
Let $w_0^\mu$ be the longest element in the parabolic subgroup
of $W$ corresponding to $\mathfrak{p}$. Let $\mathcal{R}$ denote
the right Kazhdan-Lusztig cell of $W$ containing the
element $w_0^\mu w_0$.

For $w\in (\mathbf{S}_n)_\mathrm{short}^\mathfrak{p}$, we say that the
parabolic Verma module $\Delta_w^\mathfrak{p}$ is {\em thin}
provided that there is a unique $u\in \mathcal{R}$ such that
$[\Delta_w^\mathfrak{p}:L_u]\neq 0$, moreover 
$[\Delta_w^\mathfrak{p}:L_u]=1$.

The following proposition is a general off-shot from
the arguments applied in Section~\ref{s9}.

\begin{proposition}\label{prop8.2}
For $w\in (\mathbf{S}_n)_\mathrm{short}^\mathfrak{p}$, we have
$\mathbf{K}(\Delta^\mathfrak{p}_w)=\mathtt{false}$
provided that $\Delta_w^\mathfrak{p}$ is not thin.
\end{proposition}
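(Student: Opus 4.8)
The plan is to imitate the argument used in Subsection~\ref{s9.6} verbatim, replacing the combinatorial description coming from \cite{BS11a,BS11b} by the cell-theoretic input that is available in full generality. First I would observe that if $\Delta_w^\mathfrak{p}$ is not thin, then by the definition of ``thin'' (in the sense of Subsection~\ref{s8.3}) there exist two distinct elements $x,y\in\mathcal{R}$ such that $L_x$ and $L_y$ both occur as composition factors of $\Delta_w^\mathfrak{p}$. Here $\mathcal{R}$ is the fixed right Kazhdan--Lusztig cell containing $w_0^\mu w_0$, which is precisely the right cell of minimal $\mathbf{a}$-value among those indexing simple objects of $\mathcal{O}_0^\mathfrak{p}$; this is the analogue of ``maximal $\mathbf{a}$-value'' in the maximal-parabolic section, the sign difference being due to the $w\mapsto w_0 w$ flip between the parabolic and the dual-parabolic pictures, and it is exactly the cell containing the socular constituents of parabolic Verma modules (cf.\ the use of Irving's result \cite{Ir} in Subsection~\ref{s9.4}, which here is replaced by the hypothesis that all relevant factors lie in $\mathcal{R}$ together with the multiplicity-one assumption built into the definition of thin).

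Next I would let $d$ be the unique Duflo involution in the right cell $\mathcal{R}$; since $x,y\in\mathcal{R}$ lie in the same right cell, they lie in the same right cell as $d$, and the elements $x^{-1},y^{-1},d$ all lie in a common \emph{left} cell. By \cite[Lemma~12]{MM1}, the projective functor $\theta_{x^{-1}}$ annihilates every simple object of $\mathcal{O}_0^\mathfrak{p}$ except $L_x$, and likewise $\theta_{y^{-1}}$ annihilates every simple except $L_y$. Applying $\theta_{x^{-1}}$ to $\Delta_w^\mathfrak{p}$ therefore leaves (up to grading shift) only the contribution of $L_x$, so $\theta_{x^{-1}}\Delta_w^\mathfrak{p}\cong \theta_{x^{-1}}L_x$, and similarly $\theta_{y^{-1}}\Delta_w^\mathfrak{p}\cong \theta_{y^{-1}}L_y$; here one uses that the relevant composition factors appear with multiplicity one, so there is no room for self-extensions to spoil the isomorphism. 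By \cite[Section~3]{Ma2} (equivalently \cite[Section~3]{Ma}) one has $\theta_{x^{-1}}L_x\cong \theta_d L_d\cong \theta_{y^{-1}}L_y$, giving the chain
\begin{displaymath}
\theta_{x^{-1}}\Delta_w^\mathfrak{p}\cong \theta_{x^{-1}}L_x\cong \theta_d L_d\cong \theta_{y^{-1}}L_y\cong \theta_{y^{-1}}\Delta_w^\mathfrak{p}.
\end{displaymath}

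Finally, since $x\neq y$ forces $x^{-1}\neq y^{-1}$, the indecomposable projective functors $\theta_{x^{-1}}$ and $\theta_{y^{-1}}$ are non-isomorphic, while the above shows $\theta_{x^{-1}}\Delta_w^\mathfrak{p}\cong\theta_{y^{-1}}\Delta_w^\mathfrak{p}$. Thus the injective morphism of birepresentations from $\cP/\mathrm{Ann}_{\ccP}(\Delta_w^\mathfrak{p})$ to $\mathrm{add}(\cP\,\Delta_w^\mathfrak{p})$ described in Subsection~\ref{s3.5} cannot be an equivalence: on the right the objects $\theta_{x^{-1}}\Delta_w^\mathfrak{p}$ and $\theta_{y^{-1}}\Delta_w^\mathfrak{p}$ coincide, whereas their preimages $\theta_{x^{-1}}$ and $\theta_{y^{-1}}$ in the principal birepresentation do not. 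Hence $\mathbf{K}(\Delta_w^\mathfrak{p})=\mathtt{false}$. The only genuinely delicate point, and the one I would be most careful about, is verifying that the two distinct factors $L_x,L_y$ supplied by non-thinness really do both lie in the single right cell $\mathcal{R}$ rather than in some other cell of the same $\mathbf{a}$-value --- but this is exactly what is encoded in the definition of ``thin'' in Subsection~\ref{s8.3} (all factors in $\mathcal{R}$, one of them unique), so once that definition is in place the argument goes through; the remaining ingredients (the one-sidedness of the cell-killing property of projective functors, and the identification $\theta_{x^{-1}}L_x\cong\theta_d L_d$) are quoted directly from \cite{MM1,Ma2}.
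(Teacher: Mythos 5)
Your argument correctly reproduces the paper's treatment of one of the two ways thinness can fail, but it misreads the negation of the definition and therefore omits the other. ``Thin'' in Subsection~\ref{s8.3} requires \emph{both} that there be a unique $u\in\mathcal{R}$ with $[\Delta_w^\mathfrak{p}:L_u]\neq 0$ \emph{and} that this multiplicity equal $1$. Hence ``not thin'' also covers the case where there is a unique such $u$ but $[\Delta_w^\mathfrak{p}:L_u]\geq 2$; your proposal silently converts non-thinness into ``there exist two distinct $x,y\in\mathcal{R}$ occurring in $\Delta_w^\mathfrak{p}$,'' which does not follow. The paper handles the multiplicity case first: for any $u\in\mathcal{R}$ one has $\theta_{u^{-1}}\Delta_w^\mathfrak{p}\cong(\theta_d L_d)^{\oplus[\Delta_w^\mathfrak{p}:L_u]}$, and Kostant positivity forces, via the birepresentation reformulation of Subsection~\ref{s3.5}, that the image of the indecomposable object $\theta_{u^{-1}}$ be indecomposable or zero; this yields $[\Delta_w^\mathfrak{p}:L_u]\leq 1$ and disposes of the high-multiplicity case. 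Only then does it invoke the two-distinct-elements argument you give. You would need to add this step (or prove that multiplicities in $\mathcal{R}$ are always at most one, which is not asserted in the paper and is not obvious in the general parabolic setting).

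A second, smaller issue in the case you do treat: you justify $\theta_{x^{-1}}\Delta_w^\mathfrak{p}\cong\theta_{x^{-1}}L_x$ by saying ``the relevant composition factors appear with multiplicity one,'' but non-thinness by itself gives you no control on the multiplicities of $L_x$ and $L_y$. The paper avoids this by working with the direct-sum formula $\theta_{u^{-1}}\Delta_w^\mathfrak{p}\cong(\theta_d L_d)^{\oplus[\Delta_w^\mathfrak{p}:L_u]}$ throughout, so that once multiplicities are forced to be at most one, the isomorphism $\theta_{u^{-1}}\Delta_w^\mathfrak{p}\cong\theta_{\hat u^{-1}}\Delta_w^\mathfrak{p}$ for two distinct $u,\hat u\in\mathcal{R}$ is immediate. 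With these two repairs your argument coincides with the paper's.
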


\begin{proof}
For $u\in\mathcal{R}$, we have $\theta_{u^{-1}}\Delta_w^\mathfrak{p}$
is isomorphic to a direct sum of $[\Delta_w^\mathfrak{p}:L_u]$
copies of $\theta_d L_d$, where $d$ is the Duflo involution in 
$\mathcal{R}$. From Subsection~\ref{s3.5} it follows that 
Kostant positivity of $\Delta_w^\mathfrak{p}$ implies that 
$\theta_{u^{-1}}\Delta_w^\mathfrak{p}$ is either indecomposable
or zero. In particular, we must have 
$[\Delta_w^\mathfrak{p}:L_u]\leq 1$, for each such $u$.

If we have two different $u,\hat{u}\in\mathcal{R}$ such that 
$[\Delta_w^\mathfrak{p}:L_u]=[\Delta_w^\mathfrak{p}:L_{\hat{u}}]=1$,
the we have $\theta_{u^{-1}}\Delta_w^\mathfrak{p}\cong
\theta_{\hat{u}^{-1}}\Delta_w^\mathfrak{p}$ which implies
that $\Delta_w^\mathfrak{p}$ is Kostant negative, again
by Subsection~\ref{s3.5}. This completes the proof.
\end{proof}

\subsection{Some speculations}\label{s8.4}

The above results reduce the study of Kostant problem for parabolic 
Verma module to the case of thin parabolic Verma modules that
are not covered by Subsection~\ref{s8.2}. Each thin
parabolic Verma module has simple socle. The results of
Section~\ref{s9} suggest that, for a thin parabolic Verma module
$\Delta_w^\mathfrak{p}$ with socle $L_u$, we have
$\mathbf{K}(\Delta_w^\mathfrak{p})=\mathbf{K}(L_u)$.
We do not know whether this is true in general and if yes,
how to prove this. In any case this is only a reduction to
the still open problem of determining $\mathbf{K}(L_u)$.

The above results also suggest that classification of thin
parabolic Verma modules is an interesting and important
problem.


\noindent
V.~M.: Department of Mathematics, Uppsala University, Box. 480,
SE-75106, Uppsala, SWEDEN, email: {\tt mazor\symbol{64}math.uu.se}

\noindent
S.~S.: Department of Mathematics, Indian Institute of Science, Bangalore, 560012,
 email: {\tt maths.shraddha\symbol{64}gmail.com}


\begin{thebibliography}{9999999}
\bibitem[AS03]{AS} Andersen, H.; Stroppel, C. Twisting functors on 
$\mathcal{O}$. Represent. Theory {\bf 7} (2003), 681--699. 
\bibitem[BB81]{BB81} Beilinson, A.; Bernstein, J. Localisation de $\mathfrak{g}$-modules. 
C. R. Acad. Sci. Paris Ser. I Math. {\bf 292} (1981), no. 1, 15--18.
\bibitem[BGG76]{BGG} Bernstein, I.; Gelfand, I.; Gelfand, S. 
A certain category of $\mathfrak{g}$-modules. (Russian) Funkcional. Anal. i Prilozen. 
{\bf 10} (1976), no. 2, 1--8.
\bibitem[BGG80]{BG} Bernstein, J.; Gelfand, S. Tensor products of finite- and 
infinite-dimensional representations of semisimple Lie algebras. Compositio Math. 
{\bf 41} (1980), no. 2, 245--285.
\bibitem[BS11a]{BS11a} Brundan, J.; Stroppel, C. Highest weight categories arising 
from Khovanov's diagram algebra I: cellularity. Mosc. Math. J. {\bf 11} (2011), no. 4, 
685--722.
\bibitem[BS11b]{BS11b} Brundan, J.; Stroppel, C. Highest weight categories arising 
from Khovanov's diagram algebra III: category $\mathcal{O}$. Represent. Theory 
{\bf 15} (2011), 170--243.
\bibitem[BK81]{BK81} Brylinski, J.-L.; Kashiwara, M. Kazhdan--Lusztig conjecture 
and holonomic systems. Invent. Math. {\bf 64} (1981), no. 3, 387--410.
\bibitem[Co20]{Co} Coulembier, K. The classification of blocks in BGG category 
$\mathcal{O}$. Math. Z. {\bf 295} (2020), no. 1-2, 821--837. 
\bibitem[GJ81]{GJ} Gabber, O.; Joseph, A. On the Bernstein-Gelfand-Gelfand resolution 
and the Duflo sum formula. Compositio Math. {\bf 43} (1981), no. 1, 107--131. 
\bibitem[Hu08]{Hu} Humphreys, J. Representations of semisimple Lie algebras in the 
BGG category $\mathcal{O}$. Graduate Studies in Mathematics, {\bf 94}. 
American Mathematical Society, Providence, RI, 2008. xvi+289 pp.
\bibitem[Ir85]{Ir} Irving, R. Projective modules in the category 
$\mathcal{O}_S$: self-duality. Trans. Amer. Math. Soc. {\bf 291} (1985), no. 2, 701--732.
\bibitem[Ja83]{Ja} Jantzen, J. Einh{\"u}llende Algebren halbeinfacher Lie-Algebren. (German) 
Ergebnisse der Mathematik und ihrer Grenzgebiete (3), {\bf 3}. Springer-Verlag, 
Berlin, 1983. ii+298 pp.
\bibitem[Jo80]{Jo} Joseph, A. Kostant's problem, Goldie rank and the Gelfand-Kirillov 
conjecture. Invent. Math. {\bf 56} (1980), no. 3, 191--213.
\bibitem[KL79]{KL} Kazhdan, D.; Lusztig, G. Representations of Coxeter groups and 
Hecke algebras. Invent. Math. {\bf 53} (1979), no. 2, 165--184. 
\bibitem[KhM04]{KhM} Khomenko, O.; Mazorchuk, V. Structure of modules induced from 
simple modules with minimal annihilator. Canad. J. Math. {\bf 56} (2004), no. 2, 293--309. 
\bibitem[Ka10]{Ka} K{\aa}hrstr{\"o}m, J. Kostant's problem and parabolic subgroups. 
Glasg. Math. J. {\bf 52} (2010), no. 1, 19--32.
\bibitem[KaM10]{KaM} K{\aa}hrstr{\"o}m, J.; Mazorchuk, V. A new approach to 
Kostant's problem. Algebra Number Theory {\bf 4} (2010), no. 3, 231--254.
\bibitem[KMM21]{KMM} Ko, H.; Mazorchuk, V.;  Mr{\dj}en, R.
Bigrassmannian permutations and Verma modules. Selecta Math. 
(N.S.) {\bf 27} (2021), no. 4, Paper No. 55, 24 pp.
\bibitem[KMM20]{KMM2} Ko, H.; Mazorchuk, V.;  Mr{\dj}en, R.
Some homological properties of category $\mathcal{O}$, V.
Preprint arXiv:2007.00342, to appear in IMRN.
\bibitem[Lu87]{Lu} Lusztig, G. Cells in affine Weyl groups. 
II. J. Algebra {\bf 109} (1987), no. 2, 536--548. 
\bibitem[MMM22]{MMM} Mackaay, M.; Mazorchuk, V.; Miemietz, V.; 
Kostant's problem for fully commutative permutations.
Preprint arXiv:2205.09090. 
\bibitem[M-T19]{MMMT} Mackaay, M.; Mazorchuk, V.; Miemietz, V.; 
Tubbenhauer, D.; Simple transitive 2-representations via 
(co-)algebra 1-morphisms. Indiana Univ. Math. J. {\bf 68} (2019), no. 1, 1--33. 
\bibitem[Ma05]{Ma} Mazorchuk, V. A twisted approach to Kostant's problem. 
Glasg. Math. J. {\bf 47} (2005), no. 3, 549--561.
\bibitem[Ma10]{Ma2} Mazorchuk, V. Some homological properties of the 
category $\mathcal{O}$. II. Represent. Theory {\bf 14} (2010), 249--263. 
\bibitem[MM11]{MM1} Mazorchuk, V.; Miemietz, V. Cell 2-representations of 
finitary 2-categories. Compos. Math. {\bf 147} (2011), no. 5, 1519--1545.
\bibitem[MS08a]{MS} Mazorchuk, V.; Stroppel, C. Categorification of (induced) cell modules and 
the rough structure of generalised Verma modules. Adv. Math. {\bf 219} (2008), no. 4, 1363--1426. 
\bibitem[MS08b]{MS2} Mazorchuk, V.; Stroppel, C. Categorification of Wedderburn's 
basis for $\mathbb{C}[\mathbf{S}_n]$. Arch. Math. (Basel) {\bf 91} (2008), no. 1, 1--11. 
\bibitem[RC80]{RC} Rocha-Caridi, A. Splitting criteria for $\mathfrak{g}$-modules induced 
from a parabolic and the Bernstein-Gelfand-Gelfand resolution of a finite-dimensional, 
irreducible $\mathfrak{g}$-module. Trans. Amer. Math. Soc. {\bf 262} (1980), no. 2, 335--366.
\bibitem[So90]{So} Soergel, W. Kategorie $\mathcal{O}$, perverse Garben und Moduln 
{\"u}ber den Koinvarianten zur Weylgruppe. J. Amer. Math. Soc. {\bf 3} (1990), no. 2, 421--445. 
\bibitem[So92]{So3} Soergel, W. The combinatorics of Harish-Chandra bimodules. 
J. Reine Angew. Math. {\bf 429} (1992), 49--74. 
\bibitem[So07]{So2} Soergel, W. Kazhdan--Lusztig-Polynome und unzerlegbare Bimoduln 
{\"u}ber Polynomringen. (German) J. Inst. Math. Jussieu {\bf 6} (2007), no. 3, 501--525. 
\bibitem[St03a]{St} Stroppel, C. Category $\mathcal{O}$: gradings and translation functors. 
J. Algebra {\bf 268} (2003), no. 1, 301--326. 
\bibitem[St03b]{St3} Stroppel, C. Category $\mathcal{O}$: quivers and endomorphism 
rings of projectives. Represent. Theory {\bf 7} (2003), 322--345.
\bibitem[St05]{St2} Stroppel, C. Categorification of the Temperley-Lieb category, 
tangles, and cobordisms via projective functors. Duke Math. J. {\bf 126} (2005), no. 3, 547--596. 
\end{thebibliography}
\end{document}